\numberwithin{equation}{section}
\theoremstyle{plain}
\newtheorem{theorem}{Theorem}[section]
\newtheorem{definition}{Definition}[section]
\newtheorem{corollary}{Corollary}[section]
\newtheorem{proposition}{Proposition}[section]
\newtheorem{remark}{Remark}[section]
\newcommand{\R}{\mathbb R}
\newcommand{\N}{\mathbb N}
\newcommand{\Z}{\mathbb{Z}}
\begin{document}

\title{Parametric Stein operators and variance bounds}

\author{Christophe Ley\footnote{D\'epartement de Math\'ematique, Campus Plaine, Boulevard du Triomphe, CP210, B-1050 Brussels}\,\, and Yvik Swan\footnote{Unit\'e de Recherche en Math\'ematiques, 6, rue Richard Coudenhove-Kalergi, L-1359 Luxembourg}}

\date{}
\maketitle
\vspace{-1cm}

\begin{center}
Universit\'e libre de Bruxelles and Universit\'e du Luxembourg
\end{center}

\begin{abstract}

 Stein operators are differential operators which arise
    within the so-called Stein's method for stochastic
    approximation. We propose a new mechanism for constructing
    such operators for arbitrary (continuous or discrete) parametric
    distributions with continuous dependence on the parameter. We
    provide explicit general expressions for location, scale and
    skewness families. We also provide a general expression for discrete
    distributions.  For specific choices of target distributions (including the
    Gaussian, Gamma and Poisson) we compare the
    operators hereby obtained with those provided by the classical
    approaches from the literature on Stein's method. We use
    properties of our operators to provide upper and lower variance bounds
    (only lower bounds in the discrete case) on functionals $h(X)$ of
    random variables $X$ following parametric distributions. These bounds
    are expressed in terms of the first two moments of the derivatives
    (or differences) of $h$.
    We provide general variance bounds for location, scale and skewness   families and apply our bounds to specific examples (namely the Gaussian,
    exponential, Gamma and Poisson distributions). The results obtained via
    our techniques are systematically competitive with, and sometimes
    improve on, the best bounds available in the literature.  
  
\end{abstract}

\noindent\emph{AMS 2000 subject classifications}: Primary 62E17; Secondary 60E15

\noindent\emph{Keywords}: Chernoff inequality; Cram\'er-Rao inequality; parameter of interest; Stein characterization; Stein's method.

 
\section{Introduction}
\label{sec:introduction}

Let $g$ be a given target density (continuous or discrete) and let $X \sim g$. Choose $d$  a probability metric (Kolmogorov, Wasserstein, total
variation, ...) and suppose that we aim to estimate the distance $d(W,
X)$ between the law of some random variable $W$ and that of $X$.
Stein's method (introduced in the pathbreaking \cite{S72, St86})
advocates to first construct a suitable differential operator $f
\mapsto \mathcal{T}(f,g)$ such that $X\sim g\Longleftrightarrow {\rm
  E}[\mathcal{T}(f,g)(X)]=0 \mbox{ for all } f\in\mathcal{F}(g),$ with
$\mathcal{F}(g)$ a specific ($g$-dependent) class of \emph{test
  functions}, and then to use estimates on $\delta_g= \sup_{f \in
  \mathcal{F}(g)} | {\rm E} \left[ \mathcal{T}(f,g)(W) \right]|$
(which, of course, is 0 if $W \sim g$) in order to estimate $d(W,
X)$. Although mainly reserved to Gaussian approximation
\cite{BC05,NP11,ChGoSh11} and Poisson approximation \cite{BaHoJa92},
the method has also been proven in recent years to be very powerful
for other types of approximation problems
\cite{NoPe09,Lu94,Pi04,Do12,GoRe12,PeRoRo12,PeRo11,ChFuRo11}.

The key to the success of the method lies in the properties of the
operator $\mathcal{T}(\cdot,g)$ which, if the operator is well chosen, not
only allow to obtain good estimates on the quantity $\delta_g$ but
also guarantee that these in turn yield precise information on the
probability distance $d(W, X)$.  There are several well-documented
ways to construct a suitable Stein operator including the so-called
\emph{generator method} introduced by \cite{G91,MR1035659} and the
so-called \emph{density approach} introduced in
\cite{St86,StDiHoRe04}. For instance if $g=\phi$  the standard Gaussian
density, then a routine application of the density approach gives the
first-order operator $\mathcal{T}(f,\phi)(x) = -f'(x) + x f(x)$, while  the generator approach
brings  the infinitesimal generator of the
Ornstein-Uhlenbeck process on $\R$, that is,   the second-order
operator $\tilde{\mathcal{T}}(f,\phi)(x) = -f''(x) 
+ x f'(x)$. 
If $g$ is the
rate-$1$ 
exponential distribution then suitable modifications of the density
approach provide the operators $\mathcal{T}(f,g)(x) = -f'(x) + f(x) $
and $\bar{\mathcal{T}}(f,g)(x) = -xf'(x) + (x-1) f(x)$; 
both have been used for exponential approximation problems
\cite{ChFuRo11,PeRo11}. See also \cite{Re04,LS11c,Do12,GoRe12,LS12a}
for more examples and details. 

Now consider a random variable $X \sim g$.  Stein operators
  allow, in essence,  to write general integration by parts formulas of the form
  \begin{equation}
    \label{eq:gen1}
    \mathrm{E}\left[ f(X) h'(X) \right] = \mathrm{E} \left[
      \mathcal{T}(f, g)(X) h(X)  \right]
  \end{equation}
which hold for all sufficiently regular test functions $f$ and
$h$. Setting $f = 1$  in  \eqref{eq:gen1}   and applying the
Cauchy-Schwarz inequality to the right-hand side we
deduce that
\[\frac{(\mathrm{E} \left[ h'(X) \right])^2 }{{\rm E} \left[ (\mathcal{T}(1,
    g)(X))^2 \right]}\le  \mathrm{E} \left[ (h(X))^2 \right]\]
 for all appropriate test functions $h$. This is a generalization of the celebrated Cram\'er-Rao inequality, with ${\rm E} \left[( \mathcal{T}(1,
    g)(X))^2 \right]$ being some form of Fisher information for $X$. 
In particular if $g= \phi$ is the density of a standard Gaussian random variable and $h$ has mean zero under $\phi$,
then $\mathcal{T}(1, \phi)(x)= -x$ and  this last result particularizes to
$({\rm E}[h'(X)])^2\leq{\rm Var}[h(X)]$.  Chernoff \cite{C80,C81} used a method involving
Hermite polynomials to prove that a converse inequality holds
in the case of a Gaussian, namely 
\begin{equation}\label{normalbounds}
({\rm E}[h'(X)])^2\leq{\rm Var}[h(X)]\leq{\rm E}[(h'(X))^2]
\end{equation}
with equality on both sides if and only if $h$ is
linear. Although Chernoff's technique of proof is tailored for a
Gaussian target,  an alternative proof hinging on the properties of
the Gaussian Stein operator was obtained in
\cite{Ch80} by Chen. Chen's approach was rapidly seen to be robust to a change in the
distribution, and  similar inequalities as \eqref{normalbounds} were
shown to hold for many other
distributions than the standard normal \cite{C82, K85}. The connection
between the so-called Stein type identities and extensions of the
bound \eqref{normalbounds} to arbitrary target distributions has  now
been explored in quite some detail
\cite{papadatos2001unified,C82,C81,Ch80,CPU94,CP95,borovkov1984inequality}. 

\begin{remark}
  In the case of  functionals of a Gaussian, the upper and lower bounds
  in \eqref{normalbounds} were shown in \cite{HK95} -- again through
  an argument specifically tailored for a Gaussian target -- to be the
  first terms in an infinite series expansion. Recently \cite{APP11}
  extended the latter series expansion for the lower bound from a
  Gaussian target to the class of Pearson distributions (and Ord
  distributions in the discrete case) by studying the properties of the
  Stein operators for distributions satisfying Pearson's (or Ord's)
  definition.
\end{remark}

\begin{remark}
 Bounds such as \eqref{normalbounds} and
its variations for alternative targets have deep theoretical and
practical implications. These are connected to the classical
isoperimetric problem and thus also to logarithmic Sobolev
inequalities \cite{logsob_book}.  Tight estimates on the constants in
these inequalities (called Poincar\'e constants) provide crucial
quantitative information on the properties of the distribution of the
random variable.
\end{remark}

In this paper we develop (Section \ref{sec:mainresult}) a new
mechanism -- which we call the \emph{parametric approach} -- for
building Stein operators in terms of the \emph{parameters of interest}
(location parameter, scale parameter, skewness parameter, ...)  of the
target distribution $g$.  More precisely, given a target $g$ and a
parameter $\theta$ we identify a maximal class $\mathcal{F}(g;
\theta)$ of test functions {(maximal because the conditions are
  minimal)} and a differential operator $\mathcal{T}_{
  \theta}(\cdot,g)$ such that 
\[X \sim g(\cdot; \theta)
\Longleftrightarrow {\rm E} \left[ \mathcal{T}_{\theta}(f,g)(X)
\right]=0\]
 for all $f \in \mathcal{F}(g; \theta)$.  We show (Sections
 \ref{locmodel}-~\ref{sec:discr-param-distr}) that the
operators $\mathcal{T}_{\theta}(f, g)$ indeed generalize the classical
Stein operators from the literature. We then use these operators to propose
(Section \ref{sec:furtherwork}) an extension of \eqref{normalbounds}
to a wide variety of target distributions $g$. One of the strong
points of our  bounds   is that, when applied to specific
distributions such as the exponential, Gamma or Gaussian, the results
obtained via our technique are systematically competitive with (and
sometimes improve on) the best bounds available in the literature.
Detailed specific examples are provided and discussed throughout, and lengthy
proofs are deferred to the end of the paper (Section~\ref{sec:proofs}). 

\section{Parametric Stein operators} \label{sec:mainresult} 

Throughout  we let $\Theta\subseteq \R$ be a non-empty subset of
$\R$ and say that a  measurable function
$g:\R\times \Theta \rightarrow\R^+$ 
 forms a family of $\theta$-\emph{parametric densities} on $\R$ (with
 respect to some general $\sigma$-finite dominating measure $\mu$) if
 \begin{equation}
   \label{eq:19}
   \int g(x;\theta)d\mu(x)=1
 \end{equation}
 for all $\theta \in \Theta$. If in \eqref{eq:19} $\mu$ is the
 counting measure on the integers then we further have $0 \le
 g(x;\theta) \le 1$ for all $x$ and $\theta$.  For $\theta_0\in\Theta$
 ($\theta_0$ has of course the same parametric nature as~$\theta$), we
 denote by $\mathcal{G}(\R, \theta_0)$ the collection of
 $\theta$-parametric densities on $\R$ for which there exist a bounded
 neighborhood $\Theta_0\subset \Theta$ of $\theta_0$ and a $\mu$-integrable
 function $h:\R\rightarrow\R^+$ such that $g(x;\theta)\leq h(x)$ over
 $\R$ for all $\theta\in\Theta_0$.  Given $\theta_0\in \Theta$ and $g \in \mathcal{G}(\R,
 \theta_0)$,   we write $X \sim g(\cdot;
 \theta_0)$ to denote a random variable distributed according to the
 (absolutely continuous or discrete) probability law $x \mapsto g(x; \theta_0)$.

\begin{definition}\label{def}
  Let $\theta_0$ be an interior point of $\Theta$ and let $g \in
  \mathcal{G}(\R, \theta_0)$.  Define $S_{\theta} := \left\{ x \in \R \, |
  \, g(x; \theta)>0\right\}$ as the support of
  $g(\cdot; \theta)$. We
  define the class $\mathcal{F}(g;\theta_0)$ as the collection of
  functions $f:\R\times\Theta\rightarrow\R$ such that there
  exists $\Theta_0$  some neighborhood of $\theta_0$  where  the
  following three conditions are satisfied : 
\begin{enumerate}[(i)]
\item  there exists a constant $c_f \in \R$ (not depending on $\theta$) such that
  $\int  f(x;\theta)g(x;\theta)d\mu(x)=c_f$ for
  all $\theta\in\Theta_0$;  
\item for all $x\in S_{\theta}$ the mapping $\theta \mapsto f(\cdot ;
  \theta)g(\cdot; \theta)$ is differentiable in the sense of
  distributions over $\Theta_0$;
\item there exists an integrable function $h:
  \R \to \R^+$ such that  for all
  $\theta\in\Theta_0$ we have 
  $|\partial_{\theta}(f(x;\theta)g(x;\theta))|\leq h(x)$ over
  $\R$.
\end{enumerate}
 We define the {\em Stein
    operator} $\mathcal{T}_{\theta_0}:=\mathcal{T}_{\theta_0}(\cdot,
  g):\mathcal{F}(g;\theta_0) \rightarrow\R^*$ as
  \[\mathcal{T}_{\theta_0}(f, g)(x)=
    \frac{\partial_\theta(f(x;\theta)g(x;\theta))|_{\theta=\theta_0}}{g(x;\theta_0)},
\]
with the convention that $1/g(x;\theta_0) = 0$  outside  the support
$S_{\theta_0} \subseteq \R$  of $g(\cdot; \theta_0)$. 
\end{definition}

The conditions imposed in Definition \ref{def} are, in a sense, too
stringent, and minimal conditions on the test functions $f$ are
obtained simply by requiring that one can take derivatives (with
respect to $\theta$) under the integral sign. Conditions (ii) and
(iii) are natural sufficient assumptions for this manipulation to be
allowed; see \mbox{e.g.}  \cite{LC98}. 

We now state the main result of this section. The proof 
is provided in Section~\ref{sec:proofs}.  
\begin{theorem}[Parametric Stein characterization]\label{theo}  Fix an
  interior point $\theta_0 \in \Theta$. Let  $g \in
  \mathcal{G}(\R, \theta_0)$   
and  $Z_{\theta}$ be distributed according
  to $g(\cdot; \theta)$,
  and 
  let $X$ be a random variable taking values on $\R$.   Then the following two
  assertions hold.
\begin{itemize}
\item[(1)] If $X\stackrel{\mathcal{D}}{=} Z_{\theta_0}$, then ${
    \rm E}[\mathcal{T}_{\theta_0}(f,g)(X)]=0$ for all
  $f\in\mathcal{F}(g;\theta_0)$.
\item[(2)] If  the support $S_\theta:=S$ of $g(\cdot;\theta)$ does not depend on $\theta$ and if ${\rm E}[\mathcal{T}_{\theta_0}(f,g)(X)]=0$ for all 
  $f\in\mathcal{F}(g;\theta_0)$,
  then $X\,|\,X\in
    S\stackrel{\mathcal{D}}{=} Z_{\theta_0}.$
\end{itemize}
\end{theorem}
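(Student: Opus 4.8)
The plan is to prove the two implications separately, using throughout that conditions (ii)--(iii) are precisely what is needed to differentiate under the integral sign. For assertion (1), assume $X\stackrel{\mathcal{D}}{=}Z_{\theta_0}$. Using the convention $1/g(x;\theta_0)=0$ off $S_{\theta_0}$, I would first cancel the factor $g(x;\theta_0)$ on the support and write $\mathrm{E}[\mathcal{T}_{\theta_0}(f,g)(X)]=\int_{S_{\theta_0}}\partial_\theta(f(x;\theta)g(x;\theta))|_{\theta=\theta_0}\,d\mu(x)$. By condition (i) the map $\theta\mapsto\int f g\,d\mu$ is constant on $\Theta_0$, so differentiating under the integral sign (legitimate by (ii)--(iii)) gives $\int_\R\partial_\theta(fg)|_{\theta_0}\,d\mu=0$. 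It then remains to see that the off-support part $\int_{\R\setminus S_{\theta_0}}\partial_\theta(fg)|_{\theta_0}\,d\mu$ vanishes, so that the two integrals agree; here I would use $g\ge 0$, noting that for $x\notin S_{\theta_0}$ the nonnegative map $\theta\mapsto g(x;\theta)$ has a minimum at $\theta_0$, so $\partial_\theta g|_{\theta_0}=0$, and since also $g(x;\theta_0)=0$ the product rule yields $\partial_\theta(fg)|_{\theta_0}=0$ there.

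For assertion (2) the support $S$ is fixed, and I would show that the whole family of identities pins down the law $\nu$ of $X$ on $S$. Setting $\psi_f:=\partial_\theta(fg)|_{\theta_0}$, the hypothesis reads $\int_S\psi_f(x)\,g(x;\theta_0)^{-1}\,d\nu(x)=0$ for all $f\in\mathcal{F}(g;\theta_0)$, while the computation of part (1) shows every such probe obeys the mean-zero constraint $\int_S\psi_f\,d\mu=0$. The decisive step is to realise an essentially arbitrary mean-zero $\psi$ as some $\psi_f$. Since the operator depends on $f$ only through the $\theta$-derivative of the product $fg$, I would simply prescribe that product: take $f(x;\theta)=[g(x;\theta_0)+(\theta-\theta_0)\psi(x)]/g(x;\theta)$, well defined on the fixed support $S$. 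Then $fg=g(\cdot;\theta_0)+(\theta-\theta_0)\psi$ is affine in $\theta$, so $\partial_\theta(fg)|_{\theta_0}=\psi$ exactly, conditions (ii)--(iii) hold with dominating function $|\psi|$, and condition (i) holds with $c_f=\int_S g(\cdot;\theta_0)\,d\mu=1$ precisely because $\int_S\psi\,d\mu=0$.

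With this supply of probes, the hypothesis becomes $\int_S\psi\,d\kappa=0$ for every (suitably integrable) mean-zero $\psi$, where $d\kappa:=g(\cdot;\theta_0)^{-1}\,d\nu$ on $S$. As any two admissible functions with equal $\mu$-mean differ by a mean-zero probe, the functional $u\mapsto\int_S u\,d\kappa$ factors linearly through $u\mapsto\int_S u\,d\mu$, whence $\int_S u\,d\kappa=c\int_S u\,d\mu$ for some constant $c$ and all admissible $u$; this forces $\kappa=c\,\mu$ on $S$, i.e. $d\nu=c\,g(\cdot;\theta_0)\,d\mu$. Integrating over $S$ and using $\int_S g(\cdot;\theta_0)\,d\mu=1$ identifies $c=\nu(S)=\mathrm{P}(X\in S)$, and normalising gives $\mathrm{P}(X\in A\mid X\in S)=\int_A g(\cdot;\theta_0)\,d\mu$ for all $A\subseteq S$, which is exactly $X\mid X\in S\stackrel{\mathcal{D}}{=}Z_{\theta_0}$.

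The main obstacle I anticipate lies not in these formal manipulations but in the measure-theoretic details of assertion (2). One must isolate a class of $\psi$ (say bounded and compactly supported within $S$, with zero $\mu$-mean) that is simultaneously rich enough to separate measures and tame enough that the prescribed $f$ genuinely lies in $\mathcal{F}(g;\theta_0)$; and one must control the factor $g(\cdot;\theta_0)^{-1}$ where $g$ is small, so that $\kappa$ is a bona fide measure and the passage to $\kappa=c\,\mu$ is justified. In particular the argument should \emph{derive}, rather than presuppose, the absolute continuity of $\nu|_S$ with respect to $\mu$. The fixed-support hypothesis is what makes the explicit test function well defined and prevents the boundary terms encountered in part (1) from reappearing.
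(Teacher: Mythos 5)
Your part (1) follows the paper's argument (differentiate the constant map $\theta\mapsto\int fg\,d\mu$ under the integral sign, as licensed by conditions (ii)--(iii)), with an added discussion of the off-support contribution that the paper leaves implicit; note only that your product-rule step there uses separate $\theta$-differentiability of $f$ and of $g$, whereas Definition~\ref{def} only controls the product $fg$.

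For part (2) your route is structurally the same as the paper's --- manufacture test functions whose probe $\psi=\partial_\theta(fg)|_{\theta_0}$ is prescribed, then let the resulting identities separate measures --- but differs in the construction and in its level of generality. The paper uses $f_A(x;\theta)=g(x;\theta)^{-1}\int_{\theta_0}^{\theta}l_A(x;u)g(x;u)\,du$ with $l_A(x;u)=(\mathbb{I}_A(x)-\mathrm{P}(Z_u\in A))\mathbb{I}_S(x)$, for which $\partial_\theta(f_Ag)|_{\theta_0}=l_A(\cdot;\theta_0)g(\cdot;\theta_0)$ and hence $\mathcal{T}_{\theta_0}(f_A,g)=l_A(\cdot;\theta_0)$ is \emph{bounded}; the hypothesis applied to $f_A$ immediately gives $\mathrm{P}(X\in A\cap S)=\mathrm{P}(Z_{\theta_0}\in A)\,\mathrm{P}(X\in S)$. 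Your affine prescription $fg=g(\cdot;\theta_0)+(\theta-\theta_0)\psi$ is a more economical way to realise a given probe (conditions (i)--(iii) are indeed immediate for it, using the fixed support), and specialised to $\psi=l_A(\cdot;\theta_0)g(\cdot;\theta_0)$ it reproduces the paper's proof with a simpler test function. The gap is exactly the one you flag, and it is real: for a general bounded mean-zero $\psi$ the operator is $\psi/g(\cdot;\theta_0)$, which need not be integrable against the unknown law $\nu$ of $X$, so the hypothesis $\mathrm{E}[\mathcal{T}_{\theta_0}(f,g)(X)]=0$ may be ill-posed for such $f$, and the measure $d\kappa=g(\cdot;\theta_0)^{-1}\,d\nu$ need not be $\sigma$-finite, so the factoring step $\kappa=c\mu$ is not justified as stated. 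The repair is not to shrink the support of $\psi$ but to rescale it: restrict to probes $\psi=\varphi\,g(\cdot;\theta_0)$ with $\varphi$ bounded and $\mathrm{E}[\varphi(Z_{\theta_0})]=0$. Then $\mathcal{T}_{\theta_0}(f,g)=\varphi\,\mathbb{I}_S$ is bounded, every expectation exists for every law of $X$, and taking $\varphi=\mathbb{I}_A-\mathrm{P}(Z_{\theta_0}\in A)$ closes the argument without ever introducing $\kappa$ --- at which point you have rederived the paper's proof.
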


In the next sections we consider three well-known types of parameters,
namely location, scale and skewness (in each case for absolutely
continuous target distributions), and use Theorem \ref{theo} to
construct a selection of relevant and tractable Stein operators which
are, in a sense, natural with respect to the choice of parameter. We
will also see how to apply Theorem~\ref{theo} in the case of general
discrete distributions with continuous dependence on the parameter.

\subsection{Stein operators for location models}\label{locmodel}

Let the dominating measure $\mu$ be the Lebesgue measure on $\R$ (and write $dx$ for $d\mu(x)$). Let $ \Theta=\R$,  fix $\mu_0 \in \R$ (typically one takes
  $\mu_0=0$) and consider densities of the form 
  \begin{equation}\label{eq:8}
    g(x; \mu)=g_0(x-\mu), \mu\in\R,
  \end{equation}
for $g_0$ some positive function integrating to 1 over its support. We denote by 
  $\mathcal{G}_{\text{loc}}$ the collection of $g_0$'s for which 
  $\mu$-parametric densities of the form~\eqref{eq:8} 
  belong to $\mathcal{G}(\R, \mu_0)$. 

Clearly, in the present
context,  Condition~(i) of Definition \ref{def}  
holds most naturally for test functions of the form $f(x; \mu) =
f_0(x-\mu)$ 
for which we also have 
  \begin{equation}
    \label{eq:9}
    \partial_x (f_0(x- \mu) g_0(x-\mu)) = - \partial_{\mu} (f_0(x- \mu)  g_0(x-\mu))
  \end{equation}
  for all $(x, \mu) \in \R\times \R$  (we write $\partial_x$ and
  $ \partial_{\mu}$ the weak derivatives with respect to $x$ and
  $\mu$, respectively). 
    Let $g_0 \in \mathcal{G}_{\mathrm{loc}}$. We then  define  
  $\mathcal{F}_{\mathrm{loc}}(g_0; \mu_0)$  the collection of all
  $f_0: \R \to \R$ such that 
 (i) $\int_\R  f_0(x-\mu)
    g_0(x-\mu)  dx=\int_\R   f_0(x)
    g_0(x)  dx=c_{f_0} $ some finite constant;
 (ii) the mapping $x \mapsto
  f_0(x)g_0(x)$ is differentiable in the sense of distributions;
 (iii)  there exists an integrable function $h$  such
  that $\left|\left.\partial_{y} (f_0(y - \mu) g_0(y-\mu))
    \right|_{y=x}\right|\le h(x)$ over $\R$ for all $\mu\in
  \Theta_0$, some bounded neighborhood of $\mu_0$. 

  \begin{corollary}[Location-based Stein operator]\label{cor:oper-locat-models}
The conclusions of Theorem \ref{theo} apply to any location model of
the form \eqref{eq:8} with $g_0\in \mathcal{G}_{\mathrm{loc}}$ and
operator 
\begin{equation} \label{eq:location}
\mathcal{T}_{\mu_0;\mathrm{loc}}(f_0,g_0) : \R \to \R : x \mapsto  
\frac{\left.- \partial_y (f_0(y- \mu_0) g_0(y-\mu_0) ) \right|_{y=x}}{g_0(x-\mu_0)},
\end{equation}
for $f_0 \in \mathcal{F}_{\mathrm{loc}}(g_0; \mu_0)$ and with $\partial_y$ the
derivative in the sense of distributions with respect 
to $y$.   
 \end{corollary}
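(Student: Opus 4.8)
The plan is to realize this corollary as a direct specialization of Theorem \ref{theo}. I will show that the location family \eqref{eq:8}, together with the translation-structured test functions $f(x;\mu)=f_0(x-\mu)$, fits the framework of Definition \ref{def}, and that on this subclass the general operator $\mathcal{T}_{\mu_0}(f,g)$ collapses to \eqref{eq:location}. Since $g_0\in\mathcal{G}_{\mathrm{loc}}$ means precisely that the associated density $g(x;\mu)=g_0(x-\mu)$ lies in $\mathcal{G}(\R,\mu_0)$, the integrability/domination hypothesis on the family itself is immediate; the remaining work is (a) to check that the map $f_0\mapsto\big(f:(x,\mu)\mapsto f_0(x-\mu)\big)$ sends $\mathcal{F}_{\mathrm{loc}}(g_0;\mu_0)$ into $\mathcal{F}(g;\mu_0)$, and (b) to perform the operator computation.

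For (a), I would verify the three conditions of Definition \ref{def} in turn. Condition (i) is where the location structure does the essential work: for $f(x;\mu)=f_0(x-\mu)$ the change of variables $u=x-\mu$ gives $\int f_0(x-\mu)g_0(x-\mu)\,dx=\int f_0(u)g_0(u)\,du$, a constant independent of $\mu$, so (i) holds automatically with $c_f=c_{f_0}$ --- this is exactly the ``most natural'' class flagged after \eqref{eq:8}. Conditions (ii) and (iii) are phrased in $\mathcal{F}_{\mathrm{loc}}(g_0;\mu_0)$ in the same distributional/domination language as in Definition \ref{def}, so they transfer once one identifies $\partial_\mu(f_0(x-\mu)g_0(x-\mu))$ with $-\partial_y(f_0(y-\mu)g_0(y-\mu))|_{y=x}$ via \eqref{eq:9}; the dominating function required by Definition \ref{def}(iii) is the one supplied by condition (iii) of $\mathcal{F}_{\mathrm{loc}}(g_0;\mu_0)$.

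For (b), write $F_0:=f_0g_0$ so that $f_0(x-\mu)g_0(x-\mu)=F_0(x-\mu)$. The chain rule for weak derivatives gives $\partial_\mu F_0(x-\mu)=-F_0'(x-\mu)$ and $\partial_x F_0(x-\mu)=F_0'(x-\mu)$, which is precisely the identity \eqref{eq:9}. Hence the numerator of $\mathcal{T}_{\mu_0}(f,g)(x)$, namely $\partial_\mu(f_0(x-\mu)g_0(x-\mu))|_{\mu=\mu_0}$, equals $-\partial_y(f_0(y-\mu_0)g_0(y-\mu_0))|_{y=x}$; dividing by $g_0(x-\mu_0)=g(x;\mu_0)$ produces exactly \eqref{eq:location}. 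Both conclusions of Theorem \ref{theo} then follow with the stated operator.

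I expect the main obstacle to be the distributional bookkeeping rather than anything conceptual. One must check that the chain-rule identity \eqref{eq:9} is legitimate as an identity of distributions, that its evaluation at $\mu=\mu_0$ (resp.\ $y=x$) is meaningful under the domination hypothesis, and that the convention $1/g_0(x-\mu_0)=0$ off the support is compatible with the vanishing of the numerator there. A second point worth flagging is that conclusion (2) of Theorem \ref{theo} carries the hypothesis that the support $S_\theta$ be independent of $\theta$; for a location family $S_\mu=S_0+\mu$ moves with $\mu$ unless $g_0$ is strictly positive on all of $\R$, so the converse direction applies in full generality only when $g_0$ has support $\R$.
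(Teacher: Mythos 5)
Your proposal is correct and follows exactly the route the paper intends: the paper gives no separate proof of this corollary, treating it as an immediate specialization of Theorem \ref{theo} once one observes that translation-structured test functions $f(x;\mu)=f_0(x-\mu)$ satisfy Condition (i) of Definition \ref{def} automatically and that the identity \eqref{eq:9} converts the $\mu$-derivative in the general operator into the spatial derivative appearing in \eqref{eq:location}. Your closing caveat about conclusion (2) of Theorem \ref{theo} (the support $S_\mu=S_0+\mu$ moves with $\mu$ unless $g_0$ is supported on all of $\R$) is accurate and consistent with the paper's own later discussion of the exponential location model.
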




 Take $g_0(x) = \phi(x)$ the density of a
 $\mathcal{N}(0,1)$ random variable (which clearly belongs to
 $\mathcal{G}_{\mathrm{loc}}$). Then, for $\mu_0=0$
 and any weakly differentiable function $f_0\in\mathcal{F}_{\rm
   loc}(\phi;0)$, Corollary~\ref{cor:oper-locat-models} yields the
 operator
\[
    \mathcal{T}_{\rm loc}(f_0,\phi)(x)=-f_0'(x)+xf_0(x),
\]
which shows that the usual Stein operator associated with the normal
  distribution is, statistically speaking, associated with the location parameter. More generally, for $n \in
 \N_0$, define recursively the sequence of polynomials $H_0(x) = 1$,
 $H_{n+1}(x) = -H_n'(x) + xH_n(x)$ (that is, $H_n(x)$ is the $n$th
 Hermite polynomial) and consider functions of the form
 $f:\R\times\R\rightarrow\R:(x,\mu)\mapsto f(x; \mu):=
 H_n(x-\mu)f_0(x-\mu)$, where  $f_0:\R\rightarrow\R$ is chosen such
 that $f\in \mathcal{F}_{\mathrm{loc}}(\phi;0)$.  Restricting the operator
 $\mathcal{T}_{\rm loc}(\phi,\cdot)$ to this collection of $f$'s, we find   
 \begin{equation}
 \label{eq:25} 
 \mathcal{T}_{\mathrm{loc}}(f_0,\phi)(x)= -H_n(x)f_0'(x)+H_{n+1}(x)f_0(x), \quad n
     \ge 0.
   \end{equation}
 This family of  operators was discovered by
     \cite{GR05}.  

   Next, take $g_0(x) = e^{-x} \mathbb{I}_{[0,
         \infty)}(x)$ the rate-1 exponential density (which, as for
       the Gaussian, clearly belongs to
       $\mathcal{G}_{\mathrm{loc}}$). Again setting $\mu_0=0$ we get
     the operator
     \begin{equation}
       \label{eq:locstexp}
       \mathcal{T}_{\mathrm{loc}}(f_0,{\rm Exp}) = (-f_0'(x) + f_0(x))
\mathbb{I}_{[0, \infty)}(x) -f_0(0)\delta_{x=0}, 
     \end{equation}
with $\delta_{x=0}$ the Dirac delta at $x=0$ (recall that the
derivative in \eqref{eq:location} is the derivative in the sense of
distributions).   
This was first obtained in \cite{StDiHoRe04} and used in
\cite{ChFuRo11} under the restriction $f_0(0)=0$. More generally, when $g$ belongs to the
(continuous) \emph{exponential family} (see \cite{LC98} for a precise definition), these location-based manipulations allow to retrieve the known
operators discussed e.g. in  \cite{H78}, \cite{H82} or
\cite{LC98}. 


\subsection{Stein operators for scale models}\label{scamodel}

Let the dominating measure $\mu$ be the Lebesgue measure on $\R$ (and write $dx$ for $d\mu(x)$).  Let  $\Theta = \R_0^+$,     fix
$\sigma_0 \in \Theta$ (typically one takes $\sigma_0=1$) and consider densities of the form 
  \begin{equation}\label{eq:sca}
    g(x; \sigma)=\sigma g_0(\sigma x), \sigma\in\R_0^+,
  \end{equation}
for $g_0$ some positive function integrating to 1 over its support. We denote by 
  $\mathcal{G}_{\text{sca}}$ the collection of $g_0$'s for which 
  $\sigma$-parametric densities of the form~\eqref{eq:sca} 
  belong to $\mathcal{G}(\R, \sigma_0)$. 

Condition~(i) of Definition \ref{def}  here
holds most naturally for test functions of the form $f(x; \sigma) =
f_0(\sigma x)$ 
for which we have the relationship
  \begin{equation}
    \label{eq:sca2}
    \partial_x (x f_0(\sigma x) g_0(\sigma x)) =  \partial_{\sigma} (f_0(\sigma x) \sigma g_0(\sigma x))
  \end{equation}
  for all $(x, \sigma) \in \R\times \R_0^+$. 
    Let $g_0 \in \mathcal{G}_{\mathrm{sca}}$. We then  define  
  $\mathcal{F}_{\mathrm{sca}}(g_0; \sigma_0)$  the collection of all
  $f_0: \R \to \R$ such that 
 (i) $\int_\R  f_0(\sigma x) \sigma
    g_0(\sigma x)  dx=\int_\R   f_0(x)
    g_0(x)  dx=c_{f_0} $ some finite constant;
 (ii) the mapping $x \mapsto
  f_0(x)g_0(x)$ is differentiable in the sense of distributions;
 (iii)  there exists an integrable function $h$  such
  that $\left|\left.\partial_{y} (y f_0(\sigma y) g_0(\sigma y))
    \right|_{y=x}\right|\le h(x)$ over $\R$ for all $\sigma\in
  \Theta_0$, some bounded neighborhood of $\sigma_0$. 

  \begin{corollary}[Scale-based Stein operator]\label{cor:oper-scale-models}
   The conclusions of Theorem \ref{theo} apply to any scale model of
the form \eqref{eq:sca} with $g_0\in \mathcal{G}_{\mathrm{sca}}$ and
operator 
\[
\mathcal{T}_{\sigma_0;\mathrm{sca}}(f_0,g_0) : \R \to \R : x \mapsto  
\frac{\left. \partial_y (y f_0(\sigma_0y) g_0(\sigma_0y) ) \right|_{y=x}}{\sigma_0g_0(\sigma_0x)},
\]
  for $f_0 \in \mathcal{F}_{\mathrm{sca}}(g_0; \sigma_0)$  and  $\partial_y$ the
  derivative in the sense of distributions with respect 
to $y$.  
  \end{corollary}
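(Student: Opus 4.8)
The plan is to check that scale models fall within the scope of Theorem~\ref{theo} and then to identify the resulting operator explicitly by means of the key identity \eqref{eq:sca2}; the corollary is thus a direct specialization of the general characterization, entirely parallel to the location case treated in Corollary~\ref{cor:oper-locat-models}. By the very definition of $\mathcal{G}_{\mathrm{sca}}$, the density $g(x;\sigma)=\sigma g_0(\sigma x)$ of \eqref{eq:sca} belongs to $\mathcal{G}(\R,\sigma_0)$, so the standing hypothesis of Theorem~\ref{theo} on $g$ holds automatically.

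First I would confirm that the three conditions defining $\mathcal{F}_{\mathrm{sca}}(g_0;\sigma_0)$ are precisely conditions (i)--(iii) of Definition~\ref{def} specialized to the scale parametrization $g(x;\sigma)=\sigma g_0(\sigma x)$ and to test functions of the rescaled form $f(x;\sigma)=f_0(\sigma x)$. The only point requiring a genuine computation is condition~(i): performing the change of variables $u=\sigma x$ in $\int_\R f_0(\sigma x)\,\sigma g_0(\sigma x)\,dx$ turns it into $\int_\R f_0(u)g_0(u)\,du$, which is manifestly independent of $\sigma$. This scale-invariance is exactly what makes $f(x;\sigma)=f_0(\sigma x)$ the natural family for condition~(i), mirroring the translation-invariance used in the location case. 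Conditions~(ii) and~(iii) transcribe verbatim into the stated requirements on $x\mapsto f_0(x)g_0(x)$ and on the dominated distributional derivative $\partial_y(yf_0(\sigma y)g_0(\sigma y))$. With the hypotheses verified, Theorem~\ref{theo} applies and it remains only to evaluate the operator.

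Next I would insert the scale forms into the general operator of Definition~\ref{def},
\[
\mathcal{T}_{\sigma_0}(f,g)(x)=\frac{\partial_\sigma\left(f_0(\sigma x)\,\sigma g_0(\sigma x)\right)\big|_{\sigma=\sigma_0}}{\sigma_0\, g_0(\sigma_0 x)},
\]
and invoke \eqref{eq:sca2} to trade the $\sigma$-derivative in the numerator for an $x$-derivative, namely $\partial_\sigma(f_0(\sigma x)\sigma g_0(\sigma x))=\partial_x(x f_0(\sigma x)g_0(\sigma x))$. Evaluating at $\sigma=\sigma_0$ and noting that $\partial_x(x f_0(\sigma x)g_0(\sigma x))|_{\sigma=\sigma_0}$ is the same object as $\left.\partial_y(y f_0(\sigma_0 y)g_0(\sigma_0 y))\right|_{y=x}$ --- since partial differentiation in the first slot, frozen at $\sigma=\sigma_0$, is just differentiation of the single-variable map $y\mapsto y f_0(\sigma_0 y)g_0(\sigma_0 y)$ --- produces exactly the claimed operator.

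The main obstacle is that every derivative above is to be read in the sense of distributions, so the chain-rule-type identity \eqref{eq:sca2} must be justified weakly rather than pointwise, and the interchange of the evaluation $\sigma=\sigma_0$ with the weak $x$-derivative must be handled carefully. I would secure both points by testing against an arbitrary smooth compactly supported function and transferring the derivative onto that test function, where the computation becomes classical; the domination hypothesis~(iii) then guarantees that the relevant integrals converge and that the differentiation under the integral sign underlying Theorem~\ref{theo} is legitimate. The factor $\sigma$ multiplying $g_0$ and the factor $x$ appearing in \eqref{eq:sca2} are the only features genuinely new relative to the location case, and they create no difficulty once the change of variables and the distributional identity are in place.
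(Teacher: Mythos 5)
Your proposal is correct and follows exactly the route the paper intends: the corollary is a direct specialization of Theorem~\ref{theo}, with the change of variables $u=\sigma x$ verifying condition~(i) and the identity \eqref{eq:sca2} converting the $\sigma$-derivative into the stated $y$-derivative. The paper gives no separate written proof for this corollary, and your careful treatment of the distributional derivatives only adds rigor to the same argument.
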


 Take  $g_0(x) = \phi(x)$ the density of a $\mathcal{N}(0,1)$ (which
 clearly also belongs to $\mathcal{G}_{\mathrm{sca}}$), that is, this
time we consider the normal with the scale parameter as parameter of
interest. For $\sigma_0=1$ and any weakly differentiable function
$f_0\in\mathcal{F}_{\rm sca}(\phi;1)$, Corollary
\ref{cor:oper-scale-models} yields the operator 
 \begin{equation*}
    \mathcal{T}_{\rm sca}(f_0,\phi)(x)=x f_0'(x)-(x^2-1)f_0(x),
 \end{equation*}
which is (up to the minus sign) a particular case of \eqref{eq:25} for
$n=1$. 

Next take  $g_0(x) = e^{-x}\mathbb{I}_{[0, \infty)}(x)$
  (which also belongs to $\mathcal{G}_{\mathrm{sca}}$). Note in particular how the support $\R^+$ is invariant
under scale change. Applying Corollary \ref{cor:oper-scale-models} we get the
operator
\begin{equation*}
     \mathcal{T}_{\rm sca}(f_0,{\rm Exp})(x) =  (xf_0'(x)-(x-1)f_0(x))\mathbb{I}_{[0,\infty)}(x)
\end{equation*}
after setting   $\sigma_0=1$. This scale-based operator has first been exploited in
\cite{ChFuRo11}. More generally, choosing $g$ the probability density function (pdf) of a Gamma
distribution with
shape $a>0$   we obtain 
\begin{equation*}
     \mathcal{T}_{\rm sca}(f_0,{\rm Gamma})(x) =  (xf_0'(x)-(x - 
     a)f_0(x))\mathbb{I}_{[0,\infty)}(x), 
\end{equation*}
a variant of the Gamma operator used, e.g., by \cite{NoPe09}.


\subsection{Stein operators for skewness models}

Let the dominating measure $\mu$ be the Lebesgue measure on $\R$ (and write $dx$ for $d\mu(x)$).
Contrarily to location and scale models which are defined in a
canonical way, there exist several distinct skewness models and no
canonical form of asymmetry. A popular family are the
sinh-arcsinh-skew (SAS) laws of \cite{JP09}. These laws are a
particular case of the construction given in \cite{LP10} who consider
monotone increasing diffeomorphisms $H_\delta:\R\rightarrow\R$ indexed
by the skewness parameter $\delta\in\R$ in such a way that $H_0(x)=x$
is the only odd transformation. Letting $g_0$ be a symmetric positive
function integrating to 1 over its support, this ensures that the
resulting densities
\begin{equation}\label{skew}
  g(x;\delta)=(H_\delta)'(x)g_0(H_\delta(x)),
\end{equation}
with $(H_\delta)'(x)=\partial_x H_\delta(x)$, are indeed skewed if
$\delta$ differs from 0, value for which  the initial symmetric density
$g_0$ is retrieved. The sinh-arcsinh transformation corresponds to
$H_\delta(x)=\sinh(\sinh^{-1}(x)+\delta)$. We shall call LP-densities
the skewed distributions~(\ref{skew}).

For these skew distributions, let $\Theta = \R$, and fix $\delta_0 \in
\Theta$. LP-skewness models possess densities of the
form~(\ref{skew}), and for a given transformation $H_\delta$ we denote
by $\mathcal{G}_{\text{skew}}(H_\delta)$ the collection of $g_0$'s for
which $\delta$-parametric densities of the form~\eqref{skew} belong to
$\mathcal{G}(\R, \delta_0)$. In order to produce the desired
operators, we however further need to add the condition that both
$\delta\mapsto H_\delta(\cdot)$ and $\delta\mapsto(H_\delta)'(\cdot)$
are differentiable in the sense of distributions.

Condition~(i) of Definition \ref{def}  here
holds naturally for test functions of the form $f(x; \delta)
=f_0(H_\delta(x))$. 
    Let $g_0 \in \mathcal{G}_{\mathrm{skew}}(H_\delta)$. We then  define  
  $\mathcal{F}_{\mathrm{skew}}(g_0;H_{\delta_0})$  the collection of all
  $f_0: \R \to \R$ such that 
 (i) $\int_\R  f_0(H_\delta(x)) (H_\delta)'(x)
    g_0(H_\delta( x))  dx=\int_\R   f_0(x)
    g_0(x)  dx=c_{f_0} $ some finite constant;
 (ii) the mapping $x \mapsto   f_0(x)g_0(x)$ is differentiable in the sense of distributions;
 (iii)  there exists an integrable function $h$  such
  that $\left|\partial_{\delta} ( f_0(H_\delta(x))(H_\delta)'(x) g_0(H_\delta(x)))\right|\le h(x)$ over $\R$ for all $\delta\in
  \Theta_0$, some bounded neighborhood of $\delta_0$. 

  \begin{corollary}[LP-skewness-based Stein operator]\label{cor:oper-skew-models}
 The conclusions of Theorem \ref{theo} apply to any LP-skewness model of
the form \eqref{skew} with $g_0\in \mathcal{G}_{\mathrm{skew}}(H_\delta)$ and
operator 
\[
\mathcal{T}_{H_{\delta_0};\mathrm{skew}}(f_0,g_0) : \R \to \R : x \mapsto  
\frac{\left. \partial_\delta (f_0(H_\delta(x)) (H_\delta)'(x)
    g_0(H_\delta( x)) ) \right|_{\delta=\delta_0}}{(H_{\delta_0})'(x)g_0(H_{\delta_0}(x))}
\]
for $f_0 \in \mathcal{F}_{\mathrm{skew}}(g_0;H_{\delta_0})$.
\end{corollary}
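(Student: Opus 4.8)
The plan is to apply Theorem \ref{theo} directly to the LP-skewness model \eqref{skew}, viewed as a $\delta$-parametric family in the sense of Definition \ref{def}. The essential observation is that the skewness-model ansatz $f(x;\delta) = f_0(H_\delta(x))$ feeds into the general parametric construction in exactly the way the location ($f(x;\mu)=f_0(x-\mu)$) and scale ($f(x;\sigma)=f_0(\sigma x)$) cases did. First I would verify that the three conditions defining $\mathcal{F}(g;\delta_0)$ in Definition \ref{def} reduce, for test functions of this special form, precisely to the three conditions listed in the definition of $\mathcal{F}_{\mathrm{skew}}(g_0;H_{\delta_0})$: condition (i) is the constancy of $\int_\R f_0(H_\delta(x))(H_\delta)'(x) g_0(H_\delta(x))\,dx$ in $\delta$, which by the change of variables $u=H_\delta(x)$ (valid since $H_\delta$ is a monotone increasing diffeomorphism) equals $\int_\R f_0(u)g_0(u)\,du$; condition (ii) and (iii) match after noting the composite weak-differentiability hypotheses already imposed on $\delta\mapsto H_\delta$ and $\delta\mapsto (H_\delta)'$.

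Next I would simply specialize the operator $\mathcal{T}_{\theta_0}(f,g)$ of Definition \ref{def}, namely
\[
\mathcal{T}_{\theta_0}(f,g)(x) = \frac{\partial_\delta\big(f(x;\delta)g(x;\delta)\big)\big|_{\delta=\delta_0}}{g(x;\delta_0)},
\]
to the present setting, substituting $f(x;\delta)g(x;\delta) = f_0(H_\delta(x))(H_\delta)'(x) g_0(H_\delta(x))$ and $g(x;\delta_0) = (H_{\delta_0})'(x) g_0(H_{\delta_0}(x))$. This gives verbatim the operator $\mathcal{T}_{H_{\delta_0};\mathrm{skew}}(f_0,g_0)$ claimed in the corollary. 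Since the hypothesis $g_0\in\mathcal{G}_{\mathrm{skew}}(H_\delta)$ guarantees by definition that the resulting $\delta$-parametric densities \eqref{skew} lie in $\mathcal{G}(\R,\delta_0)$, and since $\delta_0$ is a (fixed, interior) point of $\Theta=\R$, all hypotheses of Theorem \ref{theo} are met; both assertions (1) and (2) therefore transfer immediately.

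The only genuine subtlety — and the step I expect to require the most care — is verifying condition (i), i.e.\ that the change of variables $u=H_\delta(x)$ indeed renders $\int_\R f(x;\delta)g(x;\delta)\,dx$ independent of $\delta$. One must check that the Jacobian factor $(H_\delta)'(x)$ produced by the transformation combines with $g_0(H_\delta(x))$ exactly so as to cancel the $\delta$-dependence, and that $H_\delta$ being a diffeomorphism of $\R$ onto $\R$ preserves the domain of integration. This is the analogue of the identities \eqref{eq:9} and \eqref{eq:sca2} that made the location and scale cases work, and it is what singles out test functions of the form $f_0(H_\delta(x))$ as the natural class for skewness models; once this cancellation is confirmed, the remainder is a direct invocation of Theorem \ref{theo} with no further computation.
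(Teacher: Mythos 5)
Your proposal is correct and matches the paper's (implicit) treatment: the corollary is obtained exactly by specializing Definition \ref{def} and Theorem \ref{theo} to test functions of the form $f(x;\delta)=f_0(H_\delta(x))$, with the change of variables $u=H_\delta(x)$ (using that $H_\delta$ is a monotone increasing diffeomorphism) showing that condition (i) holds with $c_{f_0}=\int_\R f_0(u)g_0(u)\,du$, and the displayed operator being the verbatim substitution into $\mathcal{T}_{\theta_0}(f,g)$. The paper offers no separate argument beyond this specialization, so your route is essentially identical.
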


Given a continuous density $g_0$ we define (as in  \cite{JP09})
 the SAS-skew-model  
\[  
  g(x; \delta) =(1+x^2)^{-1/2} C_\delta(x)g_0(S_\delta(x))
\]
 where  
$S_\delta(x)=\sinh(\sinh^{-1}(x)+\delta)$ and
$C_\delta(x)=\cosh(\sinh^{-1}(x)+\delta)$  ($g(x; \delta)$ clearly belongs to
$\mathcal{G}(\R,\delta_0)$ for any $\delta_0\in\R$).  Then we have the
relationship
 \begin{equation}
   \label{eq:2}
    \partial_x \left(
     C_{\delta}(x) f_0\left( S_{\delta}(x) \right) g_0\left( S_{\delta}(x) \right)\right)=\partial_{\delta} \left(f_0
     \left( S_{\delta}(x) \right) \frac{C_{\delta}(x)}{\sqrt{1+x^2}} g_0\left( S_{\delta}(x) \right)\right) 
 \end{equation}
for all  weakly differentiable functions
 $f_0\in\mathcal{F}_{\mathrm{skew}}(\phi;S_{\delta_0})$. 
Specifying  Corollary
 \ref{cor:oper-skew-models} to this skewing mechanism we get the operator 
\[   \mathcal{T}_{\mathrm{skew}}(f_0,g_0)(x)=C_{\delta_0}(x)f_0'(S_{\delta_0}(x))+\left(\frac{S_{\delta_0}(x)}{C_{\delta_0}(x)}   
  +C_{\delta_0}(x) \frac{g_0'(S_{\delta_0}(x))}{g_0(S_{\delta_0}(x))}\right)f_0(S_{\delta_0}(x)).
\]
Fixing $\delta_0 = 0$ the above becomes 
\[
  \mathcal{T}_{\mathrm{skew}}(f_0,g_0)(x)= \sqrt{1+x^2}f_0'(x)+\left(\frac{x}{\sqrt{1+x^2}}   
  + \sqrt{1+x^2} \frac{g_0'(x)}{g_0(x)}\right)f_0(x).
\]
Further specifying $g_0=\phi$ the standard Gaussian pdf and  taking $f_0(x) = \sqrt{1+x^2}f_1(x)$
with $f_1$ some suitable function we obtain  
\[
   \mathcal{T}_{\phi}(f_1)(x)= (1+x^2)f_1'(x) -(x^3 -x)f_1(x),
\]
which seems to be a new operator for the Gaussian distribution. 
 
\subsection{Discrete parametric distributions}
\label{sec:discr-param-distr}

Let the dominating measure $\mu$ be the  counting measure on $\Z$. Let $\Theta \subset \R$, and   fix
$\theta_0 \in \Theta$. Define 
$\mathcal{G}_{\text{dis}}$ as the
collection of $\theta$-parametric discrete densities $g \in
\mathcal{G}(\Z, \Theta)$  such that 
$g(\cdot;\theta):\Z\to[0,1]$ has support
 $S=[N]:=\{0, \ldots,
N\}$ for some $N\in \N_0\cup\{\infty\}$ not depending on
$\theta$ and that the function $\theta \mapsto g(x; \theta)$ is weakly
differentiable around $\theta_0$ at all $x \in [N]$.

 Condition (i) of Definition \ref{def}
here holds  for  test functions of the form 
\begin{equation}
  \label{eq:26} 
 f(x; \theta) =   \dfrac{D_x^{+} \left( f_0(x)\dfrac{g(x;
       \theta)}{g(0;\theta)}\right)}{g(x; \theta)}  
\end{equation}
(with $ D_x^{+} (f(x)) = f(x+1)-f(x)$ the
forward difference operator). 
The combination of $D_x^{+}$ and $\partial_{\theta}$ permits us to   exchange derivatives
with respect to the variable and 
  the parameter, that is, for $f$  of the form \eqref{eq:26}   we have the relation  
\[
    \partial_{\theta} (f(x;\theta) g(x; \theta)) =
    D_x^{+}(f_0(x)\partial_\theta (g(x;
      \theta)/g(0;\theta)))
\]
for all $(x, \theta) \in [N] \times \R$. 
 Let $g \in \mathcal{G}_{\mathrm{dis}}$. We then define
 $\mathcal{F}_{\mathrm{dis}}(g;\theta_0)$  the collection of all functions $f_0: 
\Z \to \R$ such that (i) $\sum_{x=0}^ND_x^{+}(f_0(x)\partial_\theta (g(x;
      \theta)/g(0;\theta)))<\infty$ and (ii) there exists a summable function
$h:\Z\to \R^+$ such that $\left|\Delta^+_x(f_0(x) \partial_u(g(x;
  u)/g(0;u))|_{u=\theta}) \right|\le h(x)$ over $\Z$ for all
$\theta \in \Theta_0$ some neighborhood of $\theta_0$. Note that here Condition (ii) of Definition \ref{def} is always satisfied since we use the forward difference. Moreover, for finite $N$, the above-mentioned sum is also finite, and we have $\sum_{x=0}^ND_x^{+}(f_0(x)\partial_\theta (g(x;
      \theta)/g(0;\theta)))=-f_0(0)$ which does not depend on $\theta$.

\begin{corollary}[Discrete Stein operator]\label{cor:discrete} 
 The conclusions of Theorem \ref{theo} apply to any discrete
 distribution  $g \in \mathcal{G}_{\mathrm{dis}}$ with 
operator 
\[
   \mathcal{T}_{\theta_0;\mathrm{dis}}(f_0,g_0) : \Z \to \R : x \mapsto   \dfrac{
    D_x^+\left(f_0(x)\left.\partial_\theta\big({g(x;
          \theta)}/{g(0;
          \theta)}\big)\right|_{\theta=\theta_0}\right)}{g(x;
    \theta_0)}
\]
for  $f \in \mathcal{F}_{\mathrm{dis}}(g;\theta_0)$.
\end{corollary}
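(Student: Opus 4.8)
The plan is to recognize the discrete construction as a direct specialization of the general machinery, so that the statement follows from Theorem~\ref{theo} once two things are checked: that every test function of the form \eqref{eq:26} belongs to the class $\mathcal{F}(g;\theta_0)$ of Definition~\ref{def}, and that for such test functions the abstract operator $\mathcal{T}_{\theta_0}(f,g)$ collapses to the announced difference operator. No separate characterization argument is then required; moreover, since membership in $\mathcal{G}_{\mathrm{dis}}$ forces the support $[N]$ to be independent of $\theta$, both assertions (1) and (2) of Theorem~\ref{theo} become available in the discrete setting.

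First I would record the operator computation. Writing $F(x;\theta):=f_0(x)\,g(x;\theta)/g(0;\theta)$, the defining relation \eqref{eq:26} reads $f(x;\theta)g(x;\theta)=D_x^{+}F(x;\theta)$. Because $D_x^{+}$ is a finite linear combination of evaluations in the variable $x$ while $f_0$ carries no $\theta$-dependence, the forward difference in $x$ commutes with the weak derivative in $\theta$, giving $\partial_\theta(f(x;\theta)g(x;\theta))=D_x^{+}\big(f_0(x)\,\partial_\theta(g(x;\theta)/g(0;\theta))\big)$. Dividing by $g(x;\theta_0)$ and evaluating at $\theta=\theta_0$ reproduces $\mathcal{T}_{\theta_0;\mathrm{dis}}(f_0,g)$ verbatim, with the convention $1/g(x;\theta_0)=0$ off the support inherited from Definition~\ref{def}.

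It remains to verify conditions (i)--(iii) of Definition~\ref{def} for these test functions. For (i), the sum $\sum_{x=0}^{N}f(x;\theta)g(x;\theta)=\sum_{x=0}^{N}D_x^{+}F(x;\theta)$ telescopes to $F(N+1;\theta)-F(0;\theta)$; since $F(0;\theta)=f_0(0)\,g(0;\theta)/g(0;\theta)=f_0(0)$ and the boundary term at $N+1$ vanishes on the support, the sum equals the $\theta$-free constant $-f_0(0)=c_f$, as already noted in the text. Condition (ii) holds automatically here, the forward difference being a genuine finite operation and $\theta\mapsto g(x;\theta)$ being assumed weakly differentiable near $\theta_0$ at each $x\in[N]$. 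Condition (iii) is exactly the domination hypothesis (ii) built into $\mathcal{F}_{\mathrm{dis}}(g;\theta_0)$, furnishing the summable majorant $h$ that legitimizes interchanging $\partial_\theta$ with the summation when applying Theorem~\ref{theo}(1).

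The step I expect to demand the most care is the vanishing of the boundary term in condition (i) when $N=\infty$: one must confirm that the summability granted by condition (i) of $\mathcal{F}_{\mathrm{dis}}$ indeed forces $F(x;\theta)\to 0$ along the integers, uniformly enough in $\theta$ over a neighborhood of $\theta_0$ for the telescoping limit to be the constant $-f_0(0)$. The finite-$N$ case is immediate because $g(N+1;\theta)=0$ outside $[N]$, and everything else is a faithful transcription of Definition~\ref{def} and Theorem~\ref{theo} to the counting-measure setting on $\Z$.
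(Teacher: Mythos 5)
Your proposal is correct and follows essentially the same route as the paper: the corollary is justified there by the discussion preceding it, namely verifying conditions (i)--(iii) of Definition~\ref{def} for test functions of the form \eqref{eq:26} (with the telescoping sum giving the $\theta$-free constant $-f_0(0)$, the forward difference making condition (ii) automatic, and the summable majorant giving condition (iii)), commuting $D_x^{+}$ with $\partial_\theta$ to identify the operator, and then invoking Theorem~\ref{theo}, with part (2) available because the support $[N]$ is $\theta$-independent. Your flag about the boundary term at infinity when $N=\infty$ is a fair observation -- the paper itself only settles the finite-$N$ case explicitly -- but it does not change the fact that your argument is the intended one.
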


   Take $g(x;\lambda)=e^{-\lambda}{\lambda ^x}/{x!}\, \mathbb{I}_{\N}(x)$, the
density of a Poisson $\mathcal{P}(\lambda)$ distribution. Clearly, $g$
belongs to $\mathcal{G}_{\mathrm{dis}}$ for all $\lambda \in \R_0^+$ and its support
$S=\N$ is independent of $\lambda$. Then, for $x \in \N_0$ we
have  
$  \left.\partial_\lambda\big({g(x;
          \lambda)}/{g(0;
          \lambda)}\big)\right|_{\lambda=\lambda_0} =
      {\lambda_0^{x-1}}/{(x-1)!} 
$
so that 
\[
   \mathcal{T}_{\mathrm{dis}}(f_0,\mathcal{P}(\lambda_0))(x) 
=e^{\lambda_0}\left(f_0(x+1)-\frac{x}{\lambda_0}f_0(x)\right)\mathbb{I}_{\N}(x),
\]
which is (up to the scaling factor) the usual operator for the
Poisson.  Setting $g(x; p)=(1-p)^xp\, \mathbb{I}_{\N}(x)$ the geometric $Geom(p)$ distribution, we get
\begin{equation*}
  \mathcal{T}_{\mathrm{dis}}(f_0,Geom(p))(x)  =  \frac{1}{p}\left((x+1)
  f_0(x+1)-\frac{x}{1-p}f_0(x)\right)\mathbb{I}_{\N}(x).
\end{equation*}
Finally, for the binomial $Bin(n, p)$, we obtain the  $p$-characterizing operator 
\begin{equation*}
  \mathcal{T}_{p;\mathrm{dis}}(f_0,Bin(n,p))(x) =(1-p)^{-n-2}\left((n-x)
  f_0(x+1)-\frac{1-p}{p}xf_0(x)\right)\mathbb{I}_{[n]}(x).
\end{equation*}
These last two operators are not new, and can be obtained (up to
scaling factors) via the generator approach \cite{Ho04}.

\section{Variance bounds} \label{sec:furtherwork} 

Consider a   $\theta$-parametric density $g \in
\mathcal{G}(\R, \theta_0)$ with associated Stein class $\mathcal{F}(g;
\theta_0)$ and operator $\mathcal{T}_{\theta_0}(\cdot, g)$ at some
point $\theta_0 \in \Theta$. Suppose, for simplicity, that the support
$S_{\theta}$ of $g(\cdot; \theta)$ is a real interval with closure
$\bar{S_{\theta}} = [a,b]$ for $-\infty \le a < b \le \infty$, where
$a=a_{\theta}$ and $b=b_{\theta}$. (If $\mu$ is the counting measure
then $S = \left\{ a, a+1, \ldots, b-1, b \right\}$.) 

We single out the subclass $\mathcal{F}_1(g; \theta_0)
\subset\mathcal{F}(g; \theta_0)$ (often written $\mathcal{F}_1$,
whenever no ambiguity ensues) of test functions such that, for all
$\theta$ in some bounded neighborhood  $\Theta_0$
 of $\theta_0$,  (i) $f(x; \theta) \ge
0$ over $\R$, (ii) $\int_{\R} f(x; \theta) g(x; \theta) d\mu(x) = 1$ 
and (iii) the function  
\begin{equation}\label{eq:exchfun} 
  \tilde{f}(x;\theta)=\frac{1}{g(x;\theta)}\int_a^x\partial_\theta(f(y;\theta)g(y;\theta))d\mu(y)
\end{equation}
satisfies the boundary conditions 
\begin{equation}
  \label{eq:33}
 \tilde{f}(a; \theta) g(a; \theta) =
\tilde{f}(b; \theta) g(b; \theta) = 0
\end{equation}
for all $\theta \in
\Theta_0$. 
For $f \in \mathcal{F}_1(g;
\theta_0)$ the function $g^{\star}(x; \theta) = f(x; \theta)g(x;
\theta)$ is again  a $\theta$-parametric density and we have the  ``exchange of
  derivatives'' relation
\begin{equation}
  \label{eq:7}
\partial_{\theta} ( f(x; \theta) g(x; \theta) ) =
\partial_x (  \tilde{f}(x; \theta) g(x;
    \theta) )  \mbox{ for all } x \in \R \mbox{ and all } \theta \in \Theta_0.
\end{equation}
For ease of reference we call the pair $(f, \tilde{f})$
exchanging around $\theta$. If $\mu$ is the counting measure then the
derivative $\partial_x$ in \eqref{eq:7} is to be replaced with the forward difference
operator $D_x^{+}$. 

\subsection{The continuous case}
\label{sec:continuous-case}

Take the dominating measure $\mu$  the Lebesgue measure (and write $dx$ for $d\mu(x)$). All
distributions considered in this section are absolutely continuous
with respect to $\mu$, and we use the superscript $'$ to indicate a (classical) strong derivative.


Our generalized variance bounds are provided in the following
theorem, whose proof (given in Section~\ref{sec:proofs}) strongly relies on the crucial condition~(\ref{eq:33}) and on the Stein characterizations of Theorem~\ref{theo}.

\begin{theorem}\label{theo:main}
Let $g \in \mathcal{G}(\R, \theta_0)$ and $X \sim g(\cdot;
\theta_0)$.  Choose $f \in \mathcal{F}_1(g; \theta_0)$ and  let $(f, \tilde f)$ be
exchanging around $\theta$. Let $X^{\star}_{f, \theta_0} \sim g^\star(\cdot;
\theta_0) = f(\cdot; \theta_0)
g(\cdot; \theta_0)$. We write 
$\varphi_{\theta_0,g^{\star}}(x) :=  \partial_{\theta} (\log \left(
  g^{\star}(x; \theta) \right)) \big|_{\theta=\theta_0} (=  {\mathcal{T}_{\theta_0}(f,g)(x)}/{f(x;\theta_0)})
$
the score function of $X^{\star}_{f, \theta_0}$ and 
$    \mathcal{I}(\theta_0, g^{\star}) := {\rm E}  [  (
    \varphi_{\theta_0,g^{\star}}(X^{\star}_{f,\theta_0})  )^2 ] 
$ its  Fisher information.
Then 
 \begin{equation} \label{eq:bound1}
{\rm Var}\left[h(X^{\star}_{f, \theta_0})\right]\ge \frac{\left({\rm E}\left[h'(X)\tilde{f}(X;
      \theta_0) \right]\right)^2}{  \mathcal{I}(\theta_0, g^\star)} 
\end{equation}
for all  $h \in C_0^{1}(\R)$.  If, furthermore,  $ x \mapsto 
\varphi_{\theta_0,g^\star}(x) $ is  strictly monotone and strongly differentiable over its support  then 
 \begin{equation} \label{eq:bound2}
{\rm Var}\left[h(X^{\star}_{f, \theta_0})\right] \le  {\rm E}\left[
  \frac{(h'(X))^{2}}{-\varphi'_{\theta_0, g^\star}(X)}\tilde{f}(X; \theta_0)\right] 
\end{equation}
for all  $h\in C_0^{1}(\R)$. Moreover equality holds in
\eqref{eq:bound1} and \eqref{eq:bound2}  if and
only if $h(x) \propto  \varphi_{\theta_0, g^\star}(x)$ for 
all $x$.
\end{theorem}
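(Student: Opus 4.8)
The backbone of both bounds is a single covariance identity coming from the exchange-of-derivatives relation \eqref{eq:7}. Writing $W:=X^{\star}_{f,\theta_0}$, $p:=g^{\star}(\cdot;\theta_0)$, $\varphi:=\varphi_{\theta_0,g^{\star}}$ and $\tau(x):=\tilde f(x;\theta_0)/f(x;\theta_0)$ (so that $\tau p=\tilde f(\cdot;\theta_0)\,g(\cdot;\theta_0)$), I would first record the pointwise identity $\partial_x(\tau p)=\varphi\,p$, which follows by evaluating \eqref{eq:7} at $\theta_0$ and using $\partial_\theta(fg)|_{\theta_0}=\mathcal T_{\theta_0}(f,g)\,g=\varphi f g=\varphi p$. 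Integrating $\varphi(x)h(x)p(x)=h(x)\,\partial_x(\tau p)(x)$ by parts and discarding the boundary terms via the crucial condition \eqref{eq:33}, I obtain the master identity ${\rm E}[\varphi(W)h(W)]=-{\rm E}[\tilde f(X;\theta_0)h'(X)]$, valid for every $h\in C_0^1(\R)$, where on the right $X\sim g(\cdot;\theta_0)$. Taking $h\equiv 1$ (equivalently, invoking Theorem \ref{theo}(1)) gives ${\rm E}[\varphi(W)]=0$, so that $\mathcal I(\theta_0,g^{\star})={\rm Var}[\varphi(W)]$, and taking $h=\varphi$ yields the useful representation $\mathcal I(\theta_0,g^{\star})={\rm E}[\tau(W)(-\varphi'(W))]$.

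The lower bound \eqref{eq:bound1} is then immediate: the master identity says ${\rm E}[h'(X)\tilde f(X;\theta_0)]=-{\rm Cov}[h(W),\varphi(W)]$, so Cauchy--Schwarz gives $({\rm E}[h'(X)\tilde f(X;\theta_0)])^2\le{\rm Var}[h(W)]\,{\rm Var}[\varphi(W)]={\rm Var}[h(W)]\,\mathcal I(\theta_0,g^{\star})$, which rearranges to \eqref{eq:bound1}. Equality in Cauchy--Schwarz holds exactly when $h(W)-{\rm E}[h(W)]$ is proportional to $\varphi(W)$, i.e. when $h\propto\varphi$ up to an additive constant, which is the stated equality case.

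For the upper bound \eqref{eq:bound2} I would argue by completion of squares. Set $\rho:=\tau/(-\varphi')$ (well defined and, I will check, nonnegative under the monotonicity hypothesis, with $-\varphi'>0$ and $\tilde f$ of the matching sign), and note that the right-hand side of \eqref{eq:bound2} equals ${\rm E}[\rho(W)(h'(W))^2]$ after the change of density $g\mapsto p=fg$. Since both sides of the target inequality are unchanged when a constant is added to $h$, and since the strictly monotone, mean-zero function $\varphi$ has a unique simple zero $x_0$ in the support, I may assume $h(x_0)=0$; then $g_h:=h/\varphi$ extends across $x_0$ with $\varphi g_h'=h'-h\varphi'/\varphi$ bounded near $x_0$ (indeed tending to $0$ there). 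Substituting $h=\varphi\,g_h$, expanding $(h')^2=(\varphi'g_h+\varphi g_h')^2$, and integrating the cross term by parts using $\partial_x(\rho p\varphi')=-\varphi p$ (a restatement of $\partial_x(\tau p)=\varphi p$) together with \eqref{eq:33}, all terms not involving $g_h'$ cancel and I arrive at the exact identity ${\rm E}[\rho(W)(h'(W))^2]-{\rm Var}[h(W)]={\rm E}[\rho(W)\,\varphi(W)^2\,(g_h'(W))^2]\ge 0$. This is \eqref{eq:bound2}, with equality precisely when $g_h'\equiv 0$, i.e. $g_h$ constant, i.e. $h\propto\varphi$.

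The routine parts are the two integrations by parts, whose boundary terms vanish by \eqref{eq:33} and by $h\in C_0^1(\R)$. The main obstacle is the upper bound, and specifically the interior zero $x_0$ of the score $\varphi$: the substitution $h=\varphi\,g_h$ is singular there unless $h(x_0)=0$, which is why I first normalise $h$ by an additive constant. It is precisely the hypotheses that $\varphi$ be strictly monotone and strongly differentiable that guarantee (i) $\varphi'$ never vanishes, so that $\rho$ is well defined, (ii) $x_0$ is unique and simple, so that $g_h$ and $\varphi g_h'$ stay bounded across $x_0$ and the displayed identity is integrable, and (iii) the correct sign $\rho\ge 0$ making the right-hand side a genuine upper bound. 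These are exactly the extra assumptions imposed for \eqref{eq:bound2} but not for \eqref{eq:bound1}.
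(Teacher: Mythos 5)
Your lower bound argument is essentially the paper's: the master identity ${\rm E}[\varphi(W)h(W)]=-{\rm E}[\tilde f(X;\theta_0)h'(X)]$ is exactly the integration-by-parts step in the paper's proof, the centering via ${\rm E}[\varphi(W)]=0$ plays the role of the Stein characterization step \eqref{Steinhelp}, and the Cauchy--Schwarz application is the same (the paper writes it as a weighted Cauchy--Schwarz with weight $f(\cdot;\theta_0)$ under $g$, which is your covariance inequality under $p=fg$). For the upper bound, however, you take a genuinely different route. The paper follows the Chernoff--Cacoullos scheme: it represents $h(X^\star)$ as $\int_0^{\varphi(X^\star)}(h\circ\varphi^{-1})'(u)\,du$, bounds the variance by the second moment, applies Cauchy--Schwarz to the inner integral, and then uses Fubini together with \eqref{eq:7} and \eqref{eq:33} to land on \eqref{eq:bound2}. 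Your completion-of-squares argument (substituting $h=\varphi g_h$, expanding $(h')^2$, and integrating the cross term by parts via $\partial_x(\rho p\varphi')=-\varphi p$) instead produces an exact remainder formula, which makes the equality case and the positivity of the bound transparent in a way the paper's chain of inequalities does not; the price is the delicate analysis at the interior zero $x_0$ of $\varphi$, which you correctly identify and handle, though a fully rigorous treatment of the integrability of $g_h'$ across $x_0$ really wants $h,\varphi\in C^2$ or a preliminary approximation (the paper's route also relies on $\varphi^{-1}$ and $x_0$ but avoids differentiating $h/\varphi$).

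One small correction: your "exact identity" should read
\begin{equation*}
{\rm E}\bigl[\rho(W)(h'(W))^2\bigr]-{\rm E}\bigl[h(W)^2\bigr]={\rm E}\bigl[\rho(W)\varphi(W)^2(g_h'(W))^2\bigr],
\end{equation*}
so that the left-hand side of your displayed identity exceeds ${\rm E}[\rho(W)(h'(W))^2]-{\rm Var}[h(W)]$ by the additional nonnegative term $({\rm E}[h(W)])^2$. This does not harm the inequality, and since $g_h'\equiv 0$ forces $h\propto\varphi$ and hence ${\rm E}[h(W)]=0$, the stated equality characterization also survives; but as written the identity is not exact.
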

\begin{remark}
The upper bound in (\ref{eq:bound2}) is always
positive. Indeed, 
 first
observe that if $\varphi_{\theta_0,g^{\star}}$  is a diffeomorphism
then it is, in particular, strictly monotone over the support
$S_{\theta_0}$ and the function $x \mapsto \partial_{\theta}(f(x;
\theta) g(x; \theta))|_{\theta=\theta_0}$ changes sign exactly once
(because $\int_a^b \partial_{\theta}(f(x; \theta) g(x;
\theta))|_{\theta=\theta_0}dx = 0 $).  Hence if
$\varphi_{\theta_0,g^{\star}}$  is monotone increasing (resp.,
decreasing) then 
$\tilde{f}(x;\theta_0) \le 0$ (resp., $\tilde{f}(x;\theta_0) \ge 0$)
for all $x\in S_{\theta_0}$ so that the upper bound in
(\ref{eq:bound2}) is positive.
\end{remark}

A natural choice of test function in Theorem
  \ref{theo:main} is the constant function $f(x; \theta) = 1$, for
  which $g^{\star}(x; \theta)=g(x; \theta)$ and thus $X^{\star}_{f,
    \theta_0} \stackrel{\mathcal{L}}{=}X$. This
  choice is not always permitted : if the support of $g$ depends on
  the parameter and if the density does not cancel at the edges of the
  support then condition \eqref{eq:33} cannot be satisfied and our
  proofs break down. This is easily seen in the case of the rate-1 exponential
  distribution with location parameter $\mu$. There the dependence of the
  support on the parameter implies the appearance of  a Dirac delta
 in the expression of the location operator \eqref{eq:locstexp}; as a
 consequence Theorem \ref{theo:main} does not apply to this particular
 case. We contend that this breakdown  is not a drawback of our approach but rather
  one of its strengths and we will see that, despite this restriction,
  the bounds we obtain are as good as if not better than those
  already available in the literature (see the discussion at the end
  of this section). 

 In practice,
  the problem is avoided by assuming that the support of $g(\cdot;
  \theta)$ is either open or does not depend on $\theta$.   Then $f(x;
  \theta)=1$ is permitted and, using \eqref{eq:9}, \eqref{eq:sca2} and
\eqref{eq:2} (which are the specific versions of
  \eqref{eq:7} with respect to the different roles of the parameters
  considered in Section \ref{sec:mainresult}) we  obtain explicit
  forms for the exchanging functions $\tilde{f}$, and thus explicit
forms of the variance bounds from Theorem \ref{theo:main}. 
\begin{proposition}[Location, scale and skewness variance bounds]  \label{propos:apli}
Consider a $\theta$-parametric   density $g \in \mathcal{G}(\R, \theta_0)$ and let $X\sim g(\cdot;\theta_0)$.
  \begin{enumerate}
  \item  {\rm (Location-based variance bounds)}  Let $\theta=\mu \in \R$ be a
    location parameter and $g(x; \mu) = g_0(x-\mu)$  a location
    model for $g_0\in C_0^1(S)$ with open support $S$. Then the  exchanging function for  $f(x;\mu) =
    f_0(x-\mu)\in\mathcal{F}_{\rm loc}(g_0;\mu_0)$ around $\mu$ is
    $\tilde{f}(x; \mu) = -f_0(x-\mu)$. The location-score function (expressed in terms of $y=x-\mu$) is
\[
      \varphi_{g_0, \mathrm{loc}}(y) = -\frac{g_0'(y)}{g_0(y)}\mathbb{I}_{S}(y).
\]
 If 
 $\varphi_{g_0, \mathrm{loc}}$ is strictly monotone and strongly differentiable on $S$, then the location-based variance bounds read
 \begin{equation}
   \label{eq:3}
    \frac{\left({\rm E}\left[h'(X) \right]\right)^2}{  \mathcal{I}_{\rm loc}(g_0)} \leq {\rm Var}\left[h(X)\right] \leq   {\rm E}\left[
  \frac{(h'(X))^{2}}{ \varphi_{g_0, \mathrm{loc}}'(X-\mu_0)}\right] 
 \end{equation}
 for $h\in C_0^1(\R)$, with  $\mathcal{I}_{\rm loc}(g_0):={\rm E}\left[\left(\varphi_{g_0, \mathrm{loc}}(X-\mu_0)\right)^2\right]$. 
 
\item  {\rm (Scale-based variance bounds)}  Let $\theta=\sigma \in \R^{+}_0$ be a
  scale parameter and $g(x; \sigma) = \sigma g_0(\sigma x)$  a scale
  model for $g_0\in C_0^1(S)$ with either open support $S$ or support $S$ invariant under scale change. Then the  exchanging function for  $f(x;\sigma) = f_0(\sigma
  x)\in\mathcal{F}_{\rm sca}(g_0;\sigma_0)$ around $\sigma$ is
  $\tilde{f}(x; \sigma) = \frac{x}{\sigma} f_0(\sigma x)$. The
  scale-score function (expressed in terms of $y=\sigma x$) is 
\[
    \varphi_{g_0, \mathrm{scale}}(y) = 
   \frac{1}{\sigma}\left(1+  y\frac{g_0'( y)}{g_0( y)}\right)\mathbb{I}_S(y).
\]
  If  $\varphi_{g_0, \mathrm{scale}}$ is strictly monotone and strongly differentiable on  $S$,
  then the scale-based variance bounds read
   \begin{equation}
     \label{eq:6}
      \frac{\left({\rm E}\left[ h'(X)X \right]\right)^2}{\sigma_0^2\mathcal{I}_{\rm sca}(g_0)} \leq {\rm Var}\left[h(X)\right] \leq   {\rm E}\left[
  \frac{(h'(X))^{2}X}{-\sigma_0^2\varphi_{g_0, \mathrm{scale}}'(\sigma_0 X)}\right]  
   \end{equation}
 for $h\in C_0^1(\R)$, with $\mathcal{I}_{\rm sca}(g_0):={\rm
   E}\left[\left(\varphi_{g_0, \mathrm{scale}}(\sigma_0X)\right)^2\right]$.  

\item  {\rm (SAS-based variance bounds)}  Let $\theta=\delta\in \R$ be a
  skewness parameter and $g(x; \delta) =  {C_{\delta}(x)}/{
    \sqrt{1+x^2}} g_0(S_{\delta}(x))$  the SAS-skewness model for $g_0\in C_0^1(S)$ with  open support $S$. Then
  the  exchanging function for  $f(x;\sigma) =
  f_0(S_{\delta}(x))\in\mathcal{F}_{\rm skew}(g_0;S_{\delta_0})$
  around $\delta$ is $\tilde{f}(x; \delta) =
  \sqrt{1+x^2}f_0(S_{\delta}(x))$. The skewness-score function (expressed in terms of $y=S_\delta(x)$) is 
\[
    \varphi_{g_0, \mathrm{ skew}}(y) =
     \left(\frac{y}{C_{\delta}(S_{\delta}^{-1}(y))}+C_{\delta}(S_{\delta}^{-1}(y))\frac{g_0'(y)}{g_0(y)}\right)\mathbb{I}_S(y).
\]
  If  $\varphi_{g_0,\mathrm{
      skew}}(x)$ is  monotone and strongly differentiable on $S$,
  then the SAS-based variance bounds read
\begin{equation}
  \label{eq:18}
  \frac{\left({\rm E}\left[ h'(X)\sqrt{1+X^2} \right]\right)^2}{
  \mathcal{I}_{\rm skew}(g_0)} \leq {\rm Var}\left[h(X)\right] \leq
{\rm E}\left[
  \frac{(h'(X))^{2}\sqrt{1+X^2}}{-C_{\delta_0}(X)\varphi_{g_0,\mathrm{
          skew}}'(S_{\delta_0}(X))}\right]
\end{equation}
      for $h\in C_0^1(\R)$, with $\mathcal{I}_{\rm skew}(g_0):={\rm
        E}\left[\left(\varphi_{g_0, \mathrm{ skew}}(S_{\delta_0}(X))\right)^2\right]$.
\end{enumerate} 
The lower bounds in \eqref{eq:3},  \eqref{eq:6} and \eqref{eq:18}
 hold without condition on the monotonicity of the score 
function. In all cases the bounds are tight, in the sense that
equality holds if and only if the test function $h$ is proportional to
the score function. 
\end{proposition}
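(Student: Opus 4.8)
The plan is to obtain all three pairs of bounds as direct specializations of Theorem \ref{theo:main} to the constant test function $f(x;\theta)\equiv 1$. With this choice one has $g^\star=g$ and hence $X^\star_{f,\theta_0}\stackrel{\mathcal{D}}{=}X$, so that \eqref{eq:bound1}--\eqref{eq:bound2} become bounds on $\mathrm{Var}[h(X)]$ directly. Before invoking the theorem I would first check that $f\equiv 1$ genuinely lies in $\mathcal{F}_1(g;\theta_0)$: conditions (i) $f\ge 0$ and (ii) $\int fg\,d\mu=1$ are immediate since $g$ is a density, and the only delicate point is the boundary condition \eqref{eq:33}. This is exactly where the hypothesis that $S$ is open (or, for the scale family, invariant under scale change) enters: it guarantees $\tilde{f}(a;\theta)g(a;\theta)=\tilde{f}(b;\theta)g(b;\theta)=0$, so that the proof of Theorem \ref{theo:main} does not break down, as it would for the location-exponential operator \eqref{eq:locstexp} where the density fails to vanish at the edge of a moving support.

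Next, for each parameter type I would read off the exchanging function $\tilde{f}$ by matching the relevant exchange-of-derivatives identity to \eqref{eq:7} at $f_0\equiv 1$. For the location family \eqref{eq:9} gives $\partial_\mu(g_0(x-\mu))=\partial_x(-g_0(x-\mu))$, whence $\tilde{f}(x;\mu)=-1$; likewise \eqref{eq:sca2} yields $\tilde{f}(x;\sigma)=x/\sigma$ and \eqref{eq:2} yields $\tilde{f}(x;\delta)=\sqrt{1+x^2}$. In parallel I would compute the score $\varphi_{\theta_0,g^\star}(x)=\partial_\theta\log g(x;\theta)|_{\theta_0}$, which (since $g^\star=g$) is simply the score of the base model: a one-line differentiation of $\log g_0(x-\mu)$, of $\log(\sigma g_0(\sigma x))$ and of $\log\big(C_\delta(x)g_0(S_\delta(x))/\sqrt{1+x^2}\big)$ produces, after re-expressing in the natural variables $y=x-\mu$, $y=\sigma x$ and $y=S_\delta(x)$ respectively, exactly the three displayed score functions (using $\partial_\delta S_\delta=C_\delta$ and $\partial_\delta C_\delta=S_\delta$ in the skewness case).

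It then remains to substitute $\tilde{f}$ and $\varphi_{\theta_0,g^\star}$ into \eqref{eq:bound1}--\eqref{eq:bound2}. The lower bound is purely algebraic: $\mathrm{E}[h'(X)\tilde{f}(X;\theta_0)]$ reproduces $-\mathrm{E}[h'(X)]$, $\sigma_0^{-1}\mathrm{E}[h'(X)X]$ and $\mathrm{E}[h'(X)\sqrt{1+X^2}]$, while $\mathcal{I}(\theta_0,g^\star)$ is the announced Fisher information, giving the left-hand inequalities in \eqref{eq:3}, \eqref{eq:6} and \eqref{eq:18}; since \eqref{eq:bound1} holds for every $h\in C_0^1(\R)$ with no monotonicity requirement, the same is automatically true of these lower bounds, as claimed. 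For the upper bound I expect the main bookkeeping obstacle to be the passage from the $x$-derivative $\varphi'_{\theta_0,g^\star}$ appearing in \eqref{eq:bound2} to the derivative of $\varphi_{g_0,\cdot}$ in the transformed variable $y$: by the chain rule $\varphi'_{\theta_0,g^\star}(x)=\varphi'_{g_0,\cdot}(y(x))\,y'(x)$, where the Jacobian $y'(x)$ equals $1$ (location), $\sigma_0$ (scale) and $C_{\delta_0}(x)/\sqrt{1+x^2}$ (skewness). Inserting these Jacobians together with $\tilde{f}$ generates precisely the weight factors $1$, $\sigma_0^2$ and $C_{\delta_0}(X)$ together with the $\sqrt{1+X^2}$ powers displayed on the right-hand sides of \eqref{eq:3}, \eqref{eq:6} and \eqref{eq:18}; careful tracking of these factors is the only nonroutine part of the verification. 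Finally, the tightness statement transfers verbatim from Theorem \ref{theo:main}: equality holds in each bound if and only if $h\propto\varphi_{\theta_0,g^\star}$, that is, if and only if $h$ is proportional to the corresponding score function.
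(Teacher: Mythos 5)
Your proposal is correct and is exactly the paper's (only sketched) argument: specialize Theorem \ref{theo:main} to $f\equiv 1$, read off $\tilde f$ from the exchange identities \eqref{eq:9}, \eqref{eq:sca2}, \eqref{eq:2}, and track the Jacobians $1$, $\sigma_0$, $C_{\delta_0}(x)/\sqrt{1+x^2}$ when rewriting $\varphi'_{\theta_0,g^\star}$ in the transformed variable. One caveat worth recording: in the skewness case the bookkeeping you describe actually produces $\tilde f(X;\delta_0)\cdot\sqrt{1+X^2}=1+X^2$ in the numerator of the upper bound (one factor from $\tilde f$ and one from the inverse Jacobian), not the single power $\sqrt{1+X^2}$ printed in \eqref{eq:18}, so your claim of an exact match papers over what appears to be a typo in the stated bound.
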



In what follows, we shall apply Proposition~\ref{propos:apli} to three examples of probability laws, namely the Gaussian, the exponential and the Gamma. We consider all three examples as location-scale models, but we apply the SAS-skewing mechanism only to the Gaussian distribution (as the others are already skewed over $\R$).

Once again take  $g_0(x) = \phi(x) = (2\pi)^{-1/2}e^{-x^2/2}$ the standard Gaussian
density. Then, of course, $g_0'(x)/g_0(x)=-x$ and $f=1$ belongs to $\mathcal{F}_1$ for any type of parameter. Applying Proposition
\ref{propos:apli}  for $\mu_0=0$ (location case), $\sigma_0=\sigma$ (scale case) and $\delta_0=0$ (skewness case) we get 
\[
  \varphi_{\phi, \mathrm{loc}}(x) = x, \quad  \varphi_{\phi, \mathrm{sca}}(x) =
  \frac{1}{\sigma}(1-x^2) \mbox{ and } \varphi_{\phi, \mathrm{skew}}(x) = \frac{-x^3}{\sqrt{1+x^2}}.
\]
Only the  location score function is a ``sensible'' diffeomorphism (indeed, the derivative of the skewness score vanishes at the origin, leading to an infinite upper bound). Simple computations yield
\[
  \mathcal{I}_{\rm loc}(\phi) = 1, \quad  \mathcal{I}_{\rm sca}(\phi) = \frac{2}{\sigma^2}
  \mbox{ and }  \mathcal{I}_{\rm skew}(\phi) = 3 -\sqrt{\frac{e\pi}{2}}
 \mbox{Erfc}(1/\sqrt2) \approx 2.34432 =: \kappa.
\]
We thus sequentially obtain the location-based variance bounds
\begin{equation*}
    \left(   {\rm E} \left[ h'(X) \right]  \right)^2 \le{\rm Var} \left[ h(X)
    \right] \le {\rm E} \left[ \left( h'(X) \right)^2 \right],
\end{equation*}
with equality if and only if $h$ is linear (this is the well-known
bound~(\ref{normalbounds}); moreover, adding a scale parameter $\sigma$ in this location setting results in dividing both the upper and lower bound by $\sigma^2$)   as well as the scale-based bound
\[
     \frac{1}{2}({\rm E}[Xh'(X)])^{2}\leq{\rm Var}[h(X)] 
\]
with equality if and only if $h(x)\propto 1-x^2$ (this  bound is
 given in \cite{K85}) and also the skewness-based bound
\[
  \frac{\left(   {\rm E} \left[ \sqrt{1+X^2} h'(X) \right]  \right)^2 }{\kappa}\le{\rm Var} \left[ h(X)
    \right] 
\]
with equality if and only if $h(x) \propto x^3/\sqrt{1+x^2}$. 

Next take $g_0(x) =  e^{- x} \mathbb{I}_{[0,\infty)}(x)$ the rate-$1$ exponential
density; here $f=1$ is only permitted in the scale case and we have
$g_0'(x)/g_0(x)=-1$ (for $x > 0$). Thus, by 
Proposition \ref{propos:apli} for $\sigma_0=\lambda$  we get 
\[
  \varphi_{Exp, \mathrm{sca}}(x) =  \frac{1}{\lambda}(1-x)\mathbb{I}_{[0,\infty)}(x).\] 
This scale-score function is clearly a 
diffeomorphism. Also $\mathcal{I}_{\mathrm{sca}}(Exp) = \frac{1}{\lambda^2}$, which yields  the   
scale-based variance bounds 
\begin{equation}
  \label{eq:scVBLS}
\left(\mathrm{E} \left[X h'(X) \right]\right)^2\le
\mbox{Var} \left[     h(X) \right] \le  \frac{1}{\lambda} \mbox{E} \left[ X(h'(X))^2  \right].
\end{equation}
For the sake of comparison, \cite{C82} proposes the lower and upper bounds 
\begin{equation}\label{eq:cacvb} 
  \left(\mathrm{E} \left[ X h'(X) \right]\right)^2 \le \mbox{Var}\left[ h(X) \right]
  \le \frac{1}{\lambda^2} \mbox{Var} \left[ h'(X) \right]+
  \frac{1}{\lambda} \mathrm{E} \left[ X (h'(X))^2 \right];
\end{equation}
while  \cite{K85} proposes 
 \begin{equation}\label{eq:klavb} 
 \left(\mathrm{E} \left[ X h'(X) \right]\right)^2 \le \mbox{Var}\left[ h(X) \right]
  \le \frac{  4}{\lambda^2}\mathrm{E} \left[ ( h'(X))^{2} \right].
 \end{equation}
The lower bound in both these seminal papers concurs with ours from
\eqref{eq:scVBLS}. Our upper bound is evidently a strict improvement
on  \eqref{eq:cacvb}. It also improves on~\eqref{eq:klavb} in several cases. Indeed, a simple integration by parts in our upper bound (provided that $h\in C_0^2(\R)$) allows to rewrite it under the form 
\begin{equation*}
\frac{1}{\lambda^2}\left({\rm E}[(h'(X))^2]+2{\rm E}[Xh'(X)h''(X)]\right).
\end{equation*}
Whenever the second term is zero (e.g., for $h(x)=x$) or negative (e.g., for $h(x)=\sqrt{x}$), our bound is better than~\eqref{eq:klavb}.

Finally take $g_0(x) = \frac{1}{\Gamma(a)}x^{a-1}e^{-x}\mathbb{I}_{[0,\infty)}(x)$ the pdf of
a Gamma distribution with shape $a>0$. Here $f=1$
is permitted in both location and scale cases if $a>1$ and reserved to the scale
case for $a\leq1$. For the sake of clarity we will only consider the
case $a>1$.  We have $g_0'(x)/g_0(x) = \frac{(a-1-
  x)}{x}$. Applying Proposition \ref{propos:apli} under the respective
restrictions on $a$ and for $\mu_0=0$ (location case) and $\sigma_0=b$ (scale case),   
 we get 
\[
  \varphi_{Gamma, \mathrm{loc}}(x) =  \frac{-a+1+ x}{x}\mathbb{I}_{[0,\infty)}(x) \quad \mbox{and} \quad
  \varphi_{Gamma, \mathrm{sca}}(x) = \frac{1}{b}(a -x) \mathbb{I}_{[0,\infty)}(x).
\]
Both score functions are 
diffeomorphisms (on $\R_0^{+}$). Also
\[
  \mathcal{I}_{\mathrm{loc}}(Gamma) = \left\{\begin{array}
  {ll}
 \frac{1}{a-2}&\mbox{if } a>2\\
  \infty&\mbox{if } 1<a\leq2
  \end{array}\right. \quad\mbox{and} \quad \mathcal{I}_{\mathrm{sca}}(Gamma) = \frac{a}{b^2}.
 \] 
This yields  the following : location-based bounds
\begin{equation}
  \label{eq:locvbga}
(a-2) \left(\mathrm{E} \left[ h'(X) \right]\right)^2\le \mbox{Var} \left[
    h(X) \right]\le \frac{1}{a-1}\mathrm{E}\left[(h'(X))^2X^2\right]  
\end{equation}
and scale-based bounds 
\begin{equation}
  \label{eq:scavbga}
  \frac{1}{a}  \left(\mathrm{E} \left[X h'(X) \right]\right)^2  \le
\mbox{Var} \left[ h(X) \right] \le  \frac{1}{b}\mbox{E} \left[ X(h'(X))^2\right].
\end{equation}
On the one hand  \cite{C82}
only proposes a lower bound (which concurs with ours). On the other
hand, \cite{K85} proposes for $a>2$
\begin{equation}
  \label{eq:klvbga}
  \max \left( \frac{a-2}{b^2} \left(\mathrm{E} \left[h'(X) \right]\right)^2,
    \frac{1}{a}\left(\mathrm{E} \left[X h'(X) \right]\right)^2   \right)   \le
\mbox{Var} \left[ h(X) \right] \le  \frac{1}{b} \mbox{E} \left[ X(h'(X))^2\right].
\end{equation}
The upper  bound coincides with that in~\eqref{eq:scavbga}, while both candidates for the lower bounds are given in \eqref{eq:locvbga} and \eqref{eq:scavbga}, respectively (for a true comparison, we need to add a scale parameter in the lower location bound  \eqref{eq:locvbga}, resulting in a division by $b^2$). 

We conclude this section by  determining conditions on $g$ and $\theta$ for which 
the bound \eqref{eq:bound2} takes on the form
\begin{equation}
  \label{eq:17}
  \mbox{Var}(h(X)) \le d\,  \mbox{E} \left[ (h'(X))^2 \right]
\end{equation}
for some positive constant  $d$. If the special case $f = 1$ is admissible then, trivially,   
$ d= d_{g, \theta_0}  = \sup_{x \in S} (-{\tilde{f}(x;
    \theta_0)}/{ \varphi'_{\theta_0, g^{\star}}(x)
    } )
$
plays the required role, and the question becomes that of determining
conditions under which this constant is finite.  Specializing to the
case of a location model
we obtain the following intuitive sufficient condition. 
\begin{proposition}\label{proop:notat-defin-main-3}
Let $g$ be a continuous density with open support and let $X \sim g$.
If the function  $x \mapsto (\log g(x))'$ is strict monotone
decreasing and if there exists
$\epsilon>0$ such that $
-(\log g(x))'' \ge \epsilon>0$ then  \eqref{eq:17} holds with $d_{g,
  \mu_0} = \frac{1}{\epsilon}$.
\end{proposition}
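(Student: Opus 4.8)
The plan is to read this claim as the specialization of the location-based upper bound of Proposition~\ref{propos:apli} to the constant test function $f\equiv 1$, and then to control the resulting integrand uniformly by the curvature hypothesis on $\log g$. Since $g$ has open support, the choice $f(x;\mu)=1$ is admissible (as discussed immediately before the statement), so that $g^\star=g$, $X^\star_{f,\mu_0}\stackrel{\mathcal{L}}{=}X$, and the constant appearing in the upper bound is exactly $d_{g,\mu_0}=\sup_{x\in S}\big(-\tilde f(x;\mu_0)/\varphi'_{\mu_0,g^\star}(x)\big)$. By item~1 of Proposition~\ref{propos:apli} the exchanging function associated with $f\equiv 1$ is $\tilde f\equiv -1$, while the location score of $X$ is $\varphi_{g_0,\mathrm{loc}}=-(\log g)'$ (where $g$ here is the density of $X$, so that any reference point $\mu_0$ only translates the argument).

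Next I would verify that the two stated hypotheses deliver precisely the regularity required to invoke the \emph{upper} bound in \eqref{eq:3}. Differentiating the score once more gives $\varphi'_{g_0,\mathrm{loc}}=-(\log g)''$, so the assumption $-(\log g(x))''\ge\epsilon>0$ says that the score has derivative bounded below by $\epsilon$ everywhere on the support; in particular $\varphi_{g_0,\mathrm{loc}}$ is strictly monotone increasing (equivalently $(\log g)'$ is strictly decreasing, the first stated hypothesis) and strongly differentiable. These are exactly the conditions under which \eqref{eq:3} holds, so the location-based upper bound applies and reads
\[
\mathrm{Var}[h(X)]\le \mathrm{E}\!\left[\frac{(h'(X))^2}{-(\log g(X))''}\right]
\]
for all $h\in C_0^1(\R)$.

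Finally I would bound the integrand pointwise: since $-(\log g(x))''\ge\epsilon$ at every point of the support, we have $1/\big(-(\log g(x))''\big)\le 1/\epsilon$, and hence
\[
\mathrm{Var}[h(X)]\le \frac{1}{\epsilon}\,\mathrm{E}\big[(h'(X))^2\big],
\]
which is \eqref{eq:17} with $d_{g,\mu_0}=1/\epsilon$. Equivalently, $d_{g,\mu_0}=\sup_{x\in S}1/\big(-(\log g(x))''\big)\le 1/\epsilon$, so the stated constant is admissible.

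I do not expect a serious obstacle here: the entire content lies in recognizing that the proposition is the $f\equiv 1$ instance of \eqref{eq:3} together with the identity $\varphi'_{g_0,\mathrm{loc}}=-(\log g)''$. The only point that demands a little care is checking that the two hypotheses jointly guarantee the regularity (strict monotonicity and strong differentiability of the score) needed to legitimately invoke the upper bound \eqref{eq:3} rather than merely the always-valid lower bound; once that is in place the remaining estimate is an immediate pointwise domination.
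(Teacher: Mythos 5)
Your proposal is correct and follows essentially the same route as the paper: specialize to the location model with $f\equiv 1$, note $\tilde f\equiv -1$, compute $-\tilde f/\varphi'_{\mu_0,g^\star}=1/\bigl(-(\log g)''\bigr)$, and bound this by $1/\epsilon$ using the curvature hypothesis. The only difference is that you explicitly verify the monotonicity and differentiability conditions needed to invoke the upper bound in \eqref{eq:3}, a detail the paper's one-line proof leaves implicit.
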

\begin{proof}
Take a  location model $g(x; \mu) = g(x-\mu)$ with constant test
  function   $f(x; \mu) = 1$.  Then $\tilde{f}(x; \mu) = -1$  and
  we compute 
  \begin{equation*}
 \frac{\tilde{f}(x;
  \mu_0)}{-  \varphi'_{\mu_0, g^{\star}}(x)
 }  =   \frac{1}{-\frac{g''(x-\mu_0)}{g(x-\mu_0)} + \left(\frac{g'(x-\mu_0)}{g(x-\mu_0)}\right)^2}
  = \frac{1}{-(\log 
  g(x-\mu_0))''}.
  \end{equation*}
The conclusion follows from   \eqref{eq:17}.
\end{proof}
Note that the assumptions of Proposition
\ref{proop:notat-defin-main-3} hold if $g(x) = e^{-\psi(x)}$ for
$\psi(x)$ a strict convex function, i.e. if $g$ is strongly
unimodal on $\R$. We hereby recover Lemma 2.1 from \cite{K85}.  In particular if
$g(x) = (2\pi \sigma^2)^{-1/2}e^{-x^2/(2\sigma^2)}$ is the $\mathcal{N}(0,\sigma^2)$  then
$\epsilon=1/\sigma^2$ and we re-obtain the well-known upper bound
$  \mbox{Var} \left( h(X) \right) \le \sigma^2 \mathrm{E} \left[
    (h'(X))^2 \right]. $

\subsection{The discrete case}
\label{sec:discrete-case}

Take as  dominating measure $\mu$ the counting measure. For $f$ and $g$ two functions such
that $\sum_{x=a}^b D_x^+(f(x)g(x))<\infty$ and $ f(b+1)g(b+1) = f(a)g(a) = 0 $, we have the discrete integration
by parts formula
\[
  \sum_{x=a}^b \left( D_x^{+} (f(x)) \right)g(x+1)   = -  \sum_{x=a}^b
  f(x) \left( D_x^{+} (g(x)) \right).
\]
The boundary condition  \eqref{eq:33} therefore allows deduce the
following partial discrete counterpart to Theorem~\ref{theo:main}, whose proof is left to the reader.

\begin{theorem}
  Let $g \in \mathcal{G}(\Z, \theta_0)$  and $X\sim g(\cdot;\theta_0)$. 
Choose $f \in \mathcal{F}_1(g; \theta_0)$ and  let $(f, \tilde f)$ be
exchanging around $\theta$. Let $X^{\star}_{f, \theta_0} \sim g^\star(\cdot;
\theta_0) = f(\cdot; \theta_0)
g(\cdot; \theta_0)$. We write 
$\varphi_{\theta_0,g^{\star}}(x) :=  \partial_{\theta} (\log \left(
  g^{\star}(x; \theta) \right)) \big|_{\theta=\theta_0} (=
{\mathcal{T}_{\theta_0}(f,g)(x)}/{f(x;\theta_0)}) 
$
the score function of $X^{\star}_{f, \theta_0}$ and 
$    \mathcal{I}(\theta_0, g^{\star}) := {\rm E}  [  (
    \varphi_{\theta_0,g^{\star}}(X^{\star}_{f,\theta_0})  )^2 ] 
$ its  Fisher information.
Then 
 \begin{equation}     \label{eq:34}
{\rm Var}\left[h(X^{\star}_{f, \theta_0})\right]\ge \frac{\left({\rm E}\left[D_x^{+}(h(x))|_{x=X}\tilde{f}(X;
      \theta_0) \right]\right)^2}{  \mathcal{I}(\theta_0, g^\star)} 
\end{equation}
for all  $h$ with equality if and only if $h(x)\propto \varphi_{\theta_0,g^{\star}}(x)$. 
\end{theorem}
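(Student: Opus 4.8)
The plan is to mimic the proof of the lower bound \eqref{eq:bound1} in Theorem~\ref{theo:main}, replacing integration by parts with its discrete counterpart (summation by parts) and the strong derivative $h'$ with the forward difference $D_x^+h$. The whole argument rests on a single covariance inequality applied to the pair $(h(X^\star_{f,\theta_0}),\varphi_{\theta_0,g^{\star}}(X^\star_{f,\theta_0}))$, so the first task is to record that the score has mean zero under $g^{\star}(\cdot;\theta_0)$. This follows by differentiating the normalisation $\sum_x g^{\star}(x;\theta)=1$ under the summation sign: ${\rm E}[\varphi_{\theta_0,g^{\star}}(X^\star_{f,\theta_0})]=\sum_x \partial_\theta g^{\star}(x;\theta)|_{\theta_0}=\partial_\theta(1)=0$, where the interchange of sum and derivative is licensed by the summability/domination hypotheses built into $\mathcal{F}_1(g;\theta_0)$ (cf.\ condition (iii) of Definition~\ref{def}). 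Consequently ${\rm Var}[\varphi_{\theta_0,g^{\star}}(X^\star_{f,\theta_0})]=\mathcal{I}(\theta_0,g^{\star})$ and the centred covariance reduces to ${\rm E}[h(X^\star_{f,\theta_0})\varphi_{\theta_0,g^{\star}}(X^\star_{f,\theta_0})]$.

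Second, I would identify this covariance with the numerator of \eqref{eq:34}. Since $\varphi_{\theta_0,g^{\star}}(x)\,g^{\star}(x;\theta_0)=\partial_\theta g^{\star}(x;\theta)|_{\theta_0}=\partial_\theta(f(x;\theta)g(x;\theta))|_{\theta_0}$, we may write
\[
{\rm E}[h(X^\star_{f,\theta_0})\varphi_{\theta_0,g^{\star}}(X^\star_{f,\theta_0})]=\sum_{x=a}^{b} h(x)\,\partial_\theta(f(x;\theta)g(x;\theta))\big|_{\theta_0}.
\]
The exchange-of-derivatives relation \eqref{eq:7} --- in its discrete form, with $\partial_x$ replaced by $D_x^+$ --- turns the right-hand side into $\sum_x h(x)\,D_x^{+}(\tilde f(x;\theta_0)g(x;\theta_0))$. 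Summation by parts (the discrete integration-by-parts formula recorded just before the statement), together with the boundary conditions \eqref{eq:33} guaranteeing that the endpoint/telescoping terms vanish, then transfers the difference operator onto $h$ and produces --- up to an overall sign, immaterial once squared --- the quantity ${\rm E}[D_x^{+}(h(x))|_{x=X}\,\tilde f(X;\theta_0)]$ with $X\sim g(\cdot;\theta_0)$.

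Third, I would simply combine the two displays: the covariance (Cauchy--Schwarz) inequality ${\rm Cov}[h(X^\star_{f,\theta_0}),\varphi_{\theta_0,g^{\star}}(X^\star_{f,\theta_0})]^2\le {\rm Var}[h(X^\star_{f,\theta_0})]\,{\rm Var}[\varphi_{\theta_0,g^{\star}}(X^\star_{f,\theta_0})]$ rearranges at once into \eqref{eq:34}. Equality in Cauchy--Schwarz holds precisely when $h(X^\star_{f,\theta_0})-{\rm E}[h(X^\star_{f,\theta_0})]$ is almost surely a scalar multiple of the mean-zero score $\varphi_{\theta_0,g^{\star}}(X^\star_{f,\theta_0})$; since variance is insensitive to additive constants, this is exactly the stated condition $h(x)\propto\varphi_{\theta_0,g^{\star}}(x)$.

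The main obstacle is bookkeeping in the discrete summation by parts rather than anything conceptual. The forward-difference product rule $D_x^{+}(h(x)G(x))=(D_x^{+}h(x))\,G(x+1)+h(x)\,D_x^{+}G(x)$ introduces a one-step index shift, so one must track carefully against which of $g(x;\theta_0)$ or $g(x+1;\theta_0)$ the difference $D_x^{+}h$ is paired and reindex accordingly; this is precisely where the two boundary evaluations in \eqref{eq:33} --- together with the automatic vanishing of $g$ just outside the support $\{a,\dots,b\}$ --- are needed to kill the endpoint contributions. No monotonicity of the score is invoked anywhere, which is exactly why, in contrast with Theorem~\ref{theo:main}, only the lower bound survives in the discrete setting.
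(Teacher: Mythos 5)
The paper gives no proof of this statement (it is ``left to the reader''), and your strategy---mean-zero score, Cauchy--Schwarz applied to the pair $(h(X^{\star}_{f,\theta_0}),\varphi_{\theta_0,g^{\star}}(X^{\star}_{f,\theta_0}))$, and discrete summation by parts to identify the covariance with the stated numerator---is exactly the intended discrete transcription of the proof of \eqref{eq:bound1}. The gap is in the step you flag and then wave away: the index shift in the summation by parts is \emph{not} immaterial. Writing $G(x)=\tilde f(x;\theta_0)g(x;\theta_0)$ and using $D_x^{+}(hG)(x)=(D_x^{+}h)(x)G(x+1)+h(x)(D_x^{+}G)(x)$ together with \eqref{eq:33} and $G(b+1)=0$, what actually comes out is
\[
{\rm E}\bigl[h(X^{\star}_{f,\theta_0})\varphi_{\theta_0,g^{\star}}(X^{\star}_{f,\theta_0})\bigr]=\sum_{x=a}^{b}h(x)\,D_x^{+}\bigl(\tilde f(x;\theta_0)g(x;\theta_0)\bigr)=-\sum_{x=a}^{b}(D_x^{+}h)(x)\,\tilde f(x+1;\theta_0)g(x+1;\theta_0),
\]
i.e.\ $-{\rm E}[(D_x^{+}h)(X-1)\,\tilde f(X;\theta_0)]$ after reindexing, whereas \eqref{eq:34} pairs $(D_x^{+}h)(x)$ with $\tilde f(x;\theta_0)g(x;\theta_0)$. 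These are genuinely different quantities, and the difference is not cosmetic. In the paper's own Poisson example, where $\tilde 1(x;\lambda)g(x;\lambda)\propto\frac{x}{\lambda}g(x;\lambda)$, the shifted (correct) numerator yields the classical bound $\lambda\left({\rm E}[D_x^{+}h(X)]\right)^2\le{\rm Var}[h(X)]$, while the unshifted form of \eqref{eq:34} yields $\frac{1}{\lambda}\left({\rm E}[XD_x^{+}h(X)]\right)^2=\lambda\left({\rm E}[(D_x^{+}h)(X+1)]\right)^2$, which can exceed the variance: for $h=\mathbb{I}_{[2,\infty)}$ and $\lambda=0.1$ one gets $\frac{1}{\lambda}\left({\rm E}[XD_x^{+}h(X)]\right)^2=\lambda e^{-2\lambda}\approx 0.082$ against ${\rm Var}[h(X)]\approx 0.0047$.

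So your argument, carried out honestly, proves the inequality with numerator $\left(\sum_{x}(D_x^{+}h)(x)\,\tilde f(x+1;\theta_0)g(x+1;\theta_0)\right)^2$ rather than the one displayed in \eqref{eq:34}; to close the proof you must either carry the reindexing through and state the bound in that (shifted) form, or supply a reason the shift can be absorbed into $\tilde f$---which, on the conventions fixed by \eqref{eq:7} and the Poisson computation in the paper, it cannot. Two smaller points: for $N=\infty$ the vanishing of the ``boundary term at $b+1$'' is a limit that needs the summability hypotheses of $\mathcal{F}_{1}$, not just the support condition; and equality in Cauchy--Schwarz characterizes $h$ as an \emph{affine} function of the score on the support of $g^{\star}$, so the equality clause should read $h(x)=\alpha+\beta\varphi_{\theta_0,g^{\star}}(x)$ (harmless for the variance, but worth stating precisely).
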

Take  $g(x; \lambda) = e^{-\lambda}
\lambda^x/x!\mathbb{I}_\N(x)$ the pdf of the Poisson distribution. Then we  have 
$  \partial_{\lambda} g(x; \lambda) =- D_x^{+} \left(\frac{x}{\lambda} g(x; \lambda)\right)$;
in particular $1 \in \mathcal{F}_1$ because  $\tilde{1}(x;\lambda)g(x;\lambda)= \frac{x}{\lambda}g(x;\lambda)$ indeed cancels at the
edges of the support of $g$. Also we compute   $\varphi_{\lambda, g}(x) =
(-1+\frac{x}{\lambda})\mathbb{I}_\N(x)$ and $\mathcal{I}(\lambda, g) = 1/\lambda
$. Applying \eqref{eq:34} we conclude 
\[
  \mbox{Var}\left[ h(X) \right] \ge  \frac{1}{\lambda} {\rm E} \left[ XD_x^{+}(h(x))|_{x=X} \right]^2,
\]
with equality if and only if $h(x) \propto -1+x/\lambda$ on $\N$.
This last bound is, to the best of our knowledge, new.
\section{Proofs}
\label{sec:proofs}

\begin{proof}[Proof of Theorem \ref{theo}] 
  (1) Since Condition~(iii) allows for differentiating \mbox{w.r.t.}
  $\theta$ under the integral in Condition~(i) and since
  differentiating w.r.t. $\theta$ is allowed thanks to Condition~(ii),
  the claim follows immediately.  

\noindent (2)   We prove the claim in the continuous case (and write $dx$ for $d\mu(x)$). The
discrete case follows exactly along the same lines. 
  Define, for
  $A\subseteq\R$, the mapping
\[
f_A:\R\times\Theta_0\rightarrow\R :(x,\theta)\mapsto
\frac{1}{g(x;\theta)}\int_{\theta_0}^\theta l_A(x;u)g(x;u)du
\]
with  $l_A(x;u):=\left(\mathbb{I}_A(x)-{\rm P}(Z_u\in A\right))\mathbb{I}_{S}(x),$
where $  {\rm P}(Z_u\in B)=\int_\R\mathbb{I}_B(x)g(x;u)dx$
for $B \subseteq\R$.  Note that ${\rm P}(Z_u\in S)=1$ for all
$u\in\Theta_0$, since the support does not depend on the parameter of
interest. 
We claim that $f_A$ belongs to $\mathcal{F}(g;\theta_0)$. 
If this holds true  the conclusion follows since then,
by hypothesis,
\begin{equation*}
  {\rm E}[\mathcal{T}_{\theta_0}(f_A, g)(X)]= {\rm E}[l_A(X;\theta_0)]={\rm E}[\mathbb{I}_{A\cap
  S}(X)-{\rm P}(Z_{\theta_0}\in
A)\mathbb{I}_{S}(X)]=0
\end{equation*}
and thus 
\begin{equation*}
  \mathrm{P}(X \in A \, | \, X \in S) = {\rm P}(Z_{\theta_0}\in
A)
\end{equation*}
for all measurable $A\subset \R$.

To prove the claim first note that
\begin{equation*}
  \int_\R f_A(x;\theta)g(x;\theta)dx
  =\int_{\theta_0}^\theta\int_{S} l_A(x;u)g(x;u)dxdu
\end{equation*}
by Fubini's theorem, which can be
applied for all $\theta\in \Theta_0$ since in this case there exists
a constant $M$ such that
\begin{equation*}
  \int_{\R}\mathbb{I}_{(\theta_0,\theta)}(u)\int_{S}|l_A(x;u)|g(x;u)dxdu\leq|\theta-\theta_0|\le
M
\end{equation*}
for all $\theta \in \Theta_0$. We also have, by definition of $l_A$, that
\begin{align*}
   \int_{S} l_A(x;u)g(x;u)dx &   = {\rm P}(Z_u\in A\cap S)-{\rm
     P}\left(Z_u\in A\right){\rm P}(Z_u \in S) \\
   & 				 = 0.
\end{align*}
Hence $f_A$ satisfies Condition~(i). Condition~(ii) is easily
checked. Regarding Condition~(iii),  one sees  that 
$ \left.\partial_t\left(f_A(x; t)g(x;
      t)\right)\right|_{t=\theta} = l_A(x; \theta)g(x;
  \theta).$
By boundedness of the function $l_A(\cdot; \theta)$ and by definition of the class
$\mathcal{G}(\R, \theta_0)$ we know that  $
|l_A(x; \theta)g(x; \theta)|$ can be bounded by an integrable function $h(x)$ uniformly in $\theta\in\Theta_0$. Hence $f_A$ satisfies
Condition~(iii). We have thus proved that
$f_A\in\mathcal{F}(g; \theta_0)$, and the conclusion follows.
\end{proof}

\begin{proof}[Proof of Theorem \ref{theo:main}]
For the sake of
readability, throughout the proof we  simply write $X^{\star}:=X^{\star}_{f, \theta_0}$ and
  $\varphi(x):=\varphi_{\theta_0, g^\star}(x)$.
  
We first prove the lower bound \eqref{eq:bound1}. Take  $f \in
\mathcal{F}_1(g; \theta_0)$. Using \eqref{eq:7} and the different assumptions (which are tailored
for the following to hold) we get, on the one hand  
\begin{align*}        {\rm E} \left[h(X)
  \mathcal{T}_{\theta_0}(f, g)(X)\right]   &  =   \int_a^b
  h(x) {\partial_\theta(f(x; \theta) g(x; \theta))|_{\theta_0}}dx    =
  \int_a^bh(x) {\partial_x (\tilde f(x; \theta_0) g(x; \theta_0)}) dx
  \nonumber \\
  &  = -  \int_a^b h'(x)  \tilde f(x; \theta_0) g(x; \theta_0) dx  
= - {\rm E} \left[h'(X) \tilde{f}(X; \theta_0)\right]
       \end{align*}
and, on the other hand,  (recall that  $ \mathcal{T}_{\theta_0}(f,
g)(x) = \varphi(x)f(x; \theta_0)$) 
\begin{align}      \left|{\rm E} \left[h(X)
    \mathcal{T}_{\theta_0}(f, g)(X)\right] \right| & =\left|{\rm E} \left[(h(X)-{\rm E}[h(X^\star)])
    \mathcal{T}_{\theta_0}(f, g)(X)\right] \right|\label{Steinhelp}\\
    & \leq {\rm E} \left[|h(X)-{\rm E}[h(X^\star)]|\,  |\varphi(X)|f(X; \theta_0)\right] \nonumber\\
  & \le \sqrt{{\rm E} \left[(h(X)-{\rm E}[h(X^{\star})])^2 f(X; \theta_0)\right]   {\rm E} \left[f(X;
      \theta_0) (\varphi(X))^2\right]}\label{CShelp}\\
     &= \sqrt{{\rm Var} [h(X^{\star})] \, \mathcal{I}(\theta_0,g^\star)},\nonumber \end{align}
where~(\ref{Steinhelp}) follows from the Stein characterization of
Theorem~\ref{theo} 
and~(\ref{CShelp})  from the Cauchy-Schwarz inequality (recall that
$f$ is positive).

We now prove the upper bound \eqref{eq:bound2} in the case where
$\varphi$ is strict monotone decreasing, the increasing case being proved
exactly in the same way.  Let 
  $\varphi^{-1}(x)$ denote the inverse  function of $\varphi$. Then direct manipulations involving the Cauchy-Schwarz
  inequality yield 
\begin{align*}
   {\rm Var} \left[h(X^{\star})\right]& =
    {\rm Var} \left[  \int_0^{\varphi(X^\star)}(h\circ
        {\varphi}^{-1})'(u)du \right] \le {\rm E} \left[ \left( \int_0^{\varphi(X^\star)}(h\circ
        {\varphi}^{-1})'(u)du\right)^2 \right]\label{caco}\\
&  \le  {\rm E} \left[\int_0^{\varphi(X^{\star})}1^2du
  \int_0^{\varphi(X^{\star})}\left((h\circ
    \varphi^{-1})'(u)\right)^2du\right]\nonumber\\
&  = {\rm E} \left[ \varphi(X^{\star})
    \int_0^{\varphi(X^{\star})}\left(\frac{h'(\varphi^{-1}(u))}{\varphi'
        (\varphi^{-1}(u))}\right)^2du\right].    \nonumber
\end{align*}
Note how the latter expression is always positive: negative values of $\varphi(X^\star)$ are multiplied by a negative integral (since a positive function is integrated over $(0,\varphi(X^\star))$). Now  let $x_0$ be the unique point  in  $(a,b)$ such that 
$\varphi(x_0)=0$ and let
$\varphi(a)=P^+$ and $\varphi(b)=-P^{-}$ for some $P^{\pm}\in\R\cup \left\{ \pm\infty \right\}$. Then,
pursuing the above, 
\begin{align*}
 {\rm Var} \left[h(X^{\star})\right]
& \le \int_a^{x_0}
\int_0^{\varphi(x)} \partial_{\theta}(f(x; \theta)
g(x;\theta))|_{\theta=\theta_0}\left(\frac{h'(\varphi^{-1}(u))}{\varphi'(\varphi^{-1}(u))}\right)^2 du dx \\
& \quad + \int_{x_0}^{b}
\int_0^{\varphi(x)} \partial_{\theta}(f(x; \theta)g(x;\theta))|_{\theta=\theta_0}\left(\frac{h'(\varphi^{-1}(u))}{\varphi'(\varphi^{-1}(u))}\right)^2
du dx. \end{align*}
Using Fubini (which is possible since all quantities involved are positive), we deduce 
\begin{align*}
{\rm Var} \left[h(X^{\star})\right] & \le \int_0^{P^{+}}\left(\frac{h'(\varphi^{-1}(u))}{\varphi' (\varphi^{-1}(u))}\right)^2
\left(\int_a^{\varphi^{-1}(u)} \partial_{\theta}(f(x; \theta)g(x;\theta))|_{\theta=\theta_0} dx\right)du \\
& \quad - \int_{-P^{-}}^{0}\left(\frac{h'(\varphi^{-1}(u))}{\varphi' (\varphi^{-1}(u))}\right)^2
\left(\int_{\varphi^{-1}(u)}^b \partial_{\theta}(f(x; \theta)g(x;\theta))|_{\theta=\theta_0} dx\right)du
\end{align*}
From  \eqref{eq:7}  we then get 
\begin{align*}
 {\rm Var} \left[h(X^{\star})\right]
& \le
\int_0^{P^{+}}\left(\frac{h'(\varphi^{-1}(u))}{\varphi'
    (\varphi^{-1}(u))}\right)^2 
\left(\int_a^{\varphi^{-1}(u)} \partial_{x}(\tilde f(x; \theta_0)g(x;\theta_0)) dx\right)du \\
& \quad -
\int_{-P^{-}}^{0}\left(\frac{h'(\varphi^{-1}(u))}{\varphi'
    (\varphi^{-1}(u))}\right)^2 
\left(\int_{\varphi^{-1}(u)}^b \partial_{x}(\tilde f(x; \theta_0)g(x;\theta_0)) dx\right)du\\
& =
\int_0^{P^{+}}\left(\frac{h'(\varphi^{-1}(u))}{\varphi'
    (\varphi^{-1}(u))}\right)^2 
  \widetilde{f}(\varphi^{-1}(u); \theta_0)g(\varphi^{-1}(u);\theta_0)du \\
& \quad +
\int_{-P^{-}}^{0}\left(\frac{h'(\varphi^{-1}(u))}{\varphi'
    (\varphi^{-1}(u))}\right)^2 
\widetilde{f}(\varphi^{-1}(u);
\theta_0)g(\varphi^{-1}(u);\theta_0)du. 
\end{align*}
Setting $y = \varphi^{-1}(u)$ in the above and changing
variables accordingly we  obtain 
\begin{align*}
  {\rm Var} \left[h(X^{\star})\right]
& \le \int_b^{a} \frac{(h'(y))^{2}}{\varphi'(y)} 
  \tilde{f}(y; \theta_0)g(y;\theta_0)dy = {\rm E}\left[
    \frac{(h'(X))^{2}}{-\varphi'(X)}\tilde{f}(X; \theta_0) 
\right], 
\end{align*}
which is the claim. 
\end{proof}

\vspace{1cm}

\noindent ACKNOWLEDGEMENTS\vspace{0.2cm}

\noindent Christophe Ley, who is also a member of ECARES, thanks the Fonds National de la Recherche
Scientifique, Communaut\'e Fran\c caise de Belgique, for support via a
Mandat de Charg\'e de Recherche.



 \bibliographystyle{abbrv}

\bibliography{biblio_ys_stein}

\end{document}